\documentclass[11pt]{article}
 \usepackage[mathscr]{eucal}
\usepackage{epsfig,epsf,psfrag}
\usepackage{amssymb,amsfonts,latexsym}
\usepackage{amsmath}
\usepackage{graphicx}
\usepackage{epstopdf}
\usepackage{bm,xcolor,url}
\usepackage{fixltx2e}
\usepackage{array}
\usepackage{verbatim}
\usepackage{algorithm}
\usepackage{algpseudocode}
\usepackage{cite}
\usepackage{verbatim}
\usepackage{textcomp}
\usepackage{mathrsfs}
\usepackage[referable]{threeparttablex}
\usepackage{subfig}
\usepackage{amsthm}
\usepackage{enumerate}
\usepackage{footnote}
\usepackage{enumitem}
\usepackage{xr}
\usepackage{mathtools}
\catcode`~=11 \def\UrlSpecials{\do\~{\kern -.15em\lower .7ex\hbox{~}\kern .04em}} \catcode`~=13 

\allowdisplaybreaks[3]

\newcommand{\tnorm}[1]{{\left\vert\kern-0.25ex\left\vert\kern-0.25ex\left\vert #1 
    \right\vert\kern-0.25ex\right\vert\kern-0.25ex\right\vert}}
\newcommand{\tnormt}[1]{{\vert\kern-0.25ex\vert\kern-0.25ex\vert #1 
    \vert\kern-0.25ex\vert\kern-0.25ex\vert}}

\newcommand{\andd}{\mbox{and}}

\newcommand{\where}{\mathrm{where}}

\newcommand{\normt}[1]{\Vert#1\Vert}
\newcommand{\abst}[1]{\vert#1\vert}
\newcommand{\abs}[1]{\left\lvert#1\right\rvert}

\newcommand{\nn}{\nonumber}

\newcommand{\dom}{\mathsf{dom}\,}

\newcommand{\inter}{\mathsf{int}\,}
\newcommand{\bdry}{\mathsf{bd}\,}
\newcommand{\diag}{\mathsf{diag}\,}
\newcommand{\cl}{\mathsf{cl}\,}
\newcommand{\ri}{\mathsf{ri}\,}





\newcommand{\calA}{\mathcal{A}}

\newcommand{\calC}{\mathcal{C}}
\newcommand{\calD}{\mathcal{D}}

\newcommand{\calH}{\mathcal{H}}
\newcommand{\calI}{\mathcal{I}}

\newcommand{\calK}{\mathcal{K}}
\newcommand{\calL}{\mathcal{L}}

\newcommand{\calQ}{\mathcal{Q}}
\newcommand{\calR}{\mathcal{R}}
\newcommand{\calS}{\mathcal{S}}

\newcommand{\calX}{\mathcal{X}}
\newcommand{\calY}{\mathcal{Y}}
\newcommand{\calZ}{\mathcal{Z}}





\newcommand{\rmc}{\mathrm{c}}

\newcommand{\rmd}{\mathrm{d}}

\newcommand{\bbA}{\mathbb{A}}

\newcommand{\bbC}{\mathbb{C}}
\newcommand{\bbD}{\mathbb{D}}

\newcommand{\bbF}{\mathbb{F}}

\newcommand{\bbH}{\mathbb{H}}

\newcommand{\bbL}{\mathbb{L}}

\newcommand{\bbO}{\mathbb{O}}

\newcommand{\bbR}{\mathbb{R}}
\newcommand{\bbS}{\mathbb{S}}

\newcommand{\bbV}{\mathbb{V}}

\newcommand{\bbY}{\mathbb{Y}}
\newcommand{\bbZ}{\mathbb{Z}}

\newcommand{\barbbR}{\bar{\bbR}}



\DeclareMathAlphabet{\mathbsf}{OT1}{cmss}{bx}{n}

\newcommand{\rvA}{\mathsf{A}}

\newcommand{\rvL}{\mathsf{L}}




\newcommand{\hatx}{\hat{x}}
\newcommand{\hatX}{\widehat{X}}

\newcommand{\barf}{\bar{f}}
\newcommand{\barg}{\bar{g}}

\newcommand{\barx}{\bar{x}}
\newcommand{\bary}{\bar{y}}

\newcommand{\barF}{\bar{F}}

\newcommand{\barX}{\bar{X}}







\newcommand{\ip}[2]{\left\langle{#1},{#2}\right\rangle}
\newcommand{\ipt}[2]{\langle{#1},{#2}\rangle}

\newcommand{\lranglet}[2]{\langle{#1},{#2}\rangle}

\newcommand{\eqa}{\stackrel{\rm(a)}{=}}

\DeclareMathOperator*{\argmax}{arg\,max}
\DeclareMathOperator*{\argmin}{arg\,min}

\DeclareMathOperator{\st}{s.t.}

\DeclareMathOperator{\tr}{tr}

\DeclareMathOperator{\rank}{rank}


\newtheorem{theorem}{Theorem} 
\newtheorem*{theorem*}{Theorem}
\newtheorem{lemma}{Lemma}

\newtheorem{prop}{Proposition}
\newtheorem{corollary}{Corollary}
\newtheorem{assump}{Assumption} 
\newtheorem*{assump*}{Assumption}

\theoremstyle{definition}
\newtheorem{definition}{Definition} 

\theoremstyle{remark}
\newtheorem{remark}{Remark}

\newcommand{\qednew}{\nobreak \ifvmode \relax \else
      \ifdim\lastskip<1.5em \hskip-\lastskip
      \hskip1.5em plus0em minus0.5em \fi \nobreak
      \vrule height0.75em width0.5em depth0.25em\fi}

\usepackage[letterpaper,left=1in,right=1in,top=1in,bottom=1in]{geometry}

\title{The Generalized Multiplicative Gradient Method for A Class of Convex Optimization Problems Over Symmetric Cones}
\usepackage[colorlinks=true,breaklinks=true,bookmarks=true,urlcolor=blue,
     citecolor=blue,linkcolor=blue,bookmarksopen=false,draft=false]{hyperref}
\author{Renbo Zhao\thanks{Department of Business Analytics, Tippie College of Business, University of Iowa (\href{mailto:renbo-zhao@uiowa.edu}{renbo-zhao@uiowa.edu}).}
}

\newcommand{\blue}[1]{\textcolor{blue}{#1}}
\usepackage{tabularx}

\numberwithin{equation}{section}
\numberwithin{lemma}{section}
\numberwithin{prop}{section}
\numberwithin{definition}{section}
\numberwithin{assump}{section}
\numberwithin{remark}{section}
\numberwithin{corollary}{section}
\numberwithin{theorem}{section}

\usepackage{nccmath}

\newcommand{\PI}{\mathrm{PI}}
\newcommand{\GL}{\mathrm{GL}}
\newcommand{\Aut}{\mathsf{Aut}}

\newcommand{\R}{\mathsf{R}}

\begin{document}

\maketitle

%
%

\abstract{
We develop and analyze the Generalized Multiplicative Gradient (GMG) method for solving a class of convex optimization problems over symmetric cones, where the objective function does not have Lipschitz gradient over the feasible region. This problem class includes several applications, such as positron emission tomography, D-optimal design, quantum state tomography and the dual problem of Nesterov's convex relaxation of the boolean quadratic problem. We show that the GMG method has a convergence rate of $O(1/k)$ in terms of the objective gap. Our analysis of the convergence rate is rather unconventional, and to that end, 
we establish several results that may be of independent interest, such as a curvature bound of the Legendre and logarithmically-homogeneous functions,  and a Cauchy–Schwarz inequality in  representative simple Euclidean Jordan Algebras. Finally, we compare the computational complexity of the GMG method  with three other related first-order methods on several important applications, and we show that under certain mild assumptions, the GMG method achieves the best (or almost the best) computational complexities on all of the applications. 
 
}

\section{Introduction}\label{sec:intro}

Let us consider the following convex optimization problem: 
\begin{align}
F^*:= \max_{x\in\Delta_n}\; \left\{F(x):=\textstyle\sum_{j=1}^m p_j\ln(a_j^\top x)\right\}, \tag{PET} 
\label{eq:PET}
\end{align}
where  $p_j> 0$ and $a_j\ge 0$ 
for all $j\in [m]$, $\sum_{j=1}^m p_j=1$,  
and 
$$\Delta_n :=\{x \in \bbR^n : x \ge 0,\;\;  \textstyle \sum_{i=1}^n x_i = 1\}$$
denotes the unit simplex in $\bbR^n$. Furthermore, we assume that  $a_j\ne 0$ for all $j\in[m]$ and  for any $i\in[n]$, 
$\sum_{j=1}^m(a_j)_i>0$. 
 This problem has a long history and arises in different contexts, such as 
the positron emission tomography (PET) in medical imaging~\cite{Vardi_85}, the log-optimal investment~\cite{Cover_84},  and  the  mixture model estimation in statistics~\cite{Vardi_93}. Although this problem seems to have a simple structure, it poses nontrivial challenges to the ``classical'' first-order methods (see e.g.,~\cite{Nest_04}), since the objective function $F$, despite being differentiable, has non-Lipschitz gradient over the constraint set $\Delta_n$. 
Interesting enough, back in 1980s, Cover~\cite{Cover_84} proposed the following extremely simple but ``non-standard'' 
multiplicative gradient (MG) method to solve~\eqref{eq:PET}: starting from $x^0\in \ri \Delta_n$ (i.e., the relative interior of $\Delta_n$), at each iteration $t\ge 0$, one obtains the next iterate $x^{t+1}$ by (element-wise) multiplying the current iterate $x^t$ with 
$\nabla F(x^t)$, namely 
\begin{equation}
x^{t+1}:= x^t \odot \nabla F(x^t) \quad \Longleftrightarrow\quad  x_i^{t+1}:= x_i^t \nabla_i F(x^t), \quad \forall\,i\in[n]. \tag{MG}\label{eq:MG}
\end{equation}
Cover showed that $F(x^t)\uparrow F^*$ as $t\to +\infty$ and Csisz\'ar and Tusn\'ady~\cite{Csiszar_84} further showed that the sequence $\{x^t\}_{t\ge 0}$ has a unique limit point (which is an optimal solution of~\eqref{eq:PET}). 
Despite the thorough understanding of the asymptotic convergence properties of~\eqref{eq:MG} for solving~\eqref{eq:PET}, the convergence rate of this algorithm remains unknown till recently. 
Indeed, our recent work~\cite{Zhao_23pet} showed that when $x^0 = (1/n) e$ (where $e$ denotes the vector of all ones), we have the following $O(1/t)$ convergence rate for both the last- and averaged-iterate: 
\begin{equation}
\max\big\{F^* - F(x^t), F^* - F(\barx^t)\big\} \le \frac{\ln(n)}{t+1}, \quad{\rm where} \;\; \barx^t:=\tfrac{1}{t+1}\textstyle 
{\sum}_{k=0}^t \;x^k,\quad \forall\, t\ge 0. \label{eq:rate_PET}
\end{equation}


At this point, one might suspect that~\eqref{eq:MG} is a tailor-made method for solving~\eqref{eq:PET}, and bears no generality. 
Let us present another 
important problem in computational statistics, namely the D-optimal design problem, where~\eqref{eq:MG} demonstrates its efficacy.  
Let $\bbS_{+}^m$ and $\bbS_{++}^m$ denote the sets of $m\times m$  symmetric positive semi-definite (PSD) and  positive definite matrices, respectively, and we write $A\succeq 0$ if $A \in \bbS_{+}^m$ and $A\succ 0$ if $A \in \bbS_{++}^m$. Given matrices $\{A_i\}_{i=1}^n \subseteq \bbS_{+}^m$ such that $A_i\ne 0$ for $i\in[n]$ and $\sum_{i=1}^n A_i \succ 0$, the D-optimal design problem is written as 
\begin{equation}
 {\max}_{x\in\Delta_n }\; (1/m) \textstyle \ln\det\big(\sum_{i=1}^n x_i A_i\big). 
\tag{DOPT} \label{eq:DOPT} 
\end{equation}
From the viewpoint of statistical estimation,  
solving~\eqref{eq:DOPT} corresponds to finding a design measure that maximizes the log-determinant of the expected Fisher information matrix about the unknown model parameter 
 that we wish to estimate (for details, see e.g.,~\cite{Pukel_06,Yu_10,Zhao_25}). 
Note that depending on the underlying statistical model, the matrices $\{A_i\}_{i=1}^n$ need not have rank one. However, when they do,~\eqref{eq:DOPT} also arises as the Lagrangian dual of the minimum-volume covering ellipsoid  problem in computational geometry, which dates back at least 70 years to \cite{john} (see also~\cite{toddminimum}). Somewhat surprisingly, in the optimal design literature,~\eqref{eq:MG} is one of the most widely adopted methods for solving~\eqref{eq:DOPT}. This method was first introduced in~\cite{Silvey_78}, and received extensive research efforts over the past fifty years (see~\cite{Yu_10} and references therein). Similar to the situation as in~\eqref{eq:PET}, the asymptotic convergence of~\eqref{eq:MG} when applied to~\eqref{eq:DOPT} has been well-understood. Specifically, from~\cite[Theorem~2]{Yu_10}, we know that $F(x^t)\uparrow F^*$ as $t\to +\infty$ and all limit points of $\{x^t\}_{t\ge 0}$ are optimal for~\eqref{eq:DOPT}. The convergence rate of~\eqref{eq:MG} remains unknown until the recent paper~\cite{Cohen_19}. Specifically,  the authors showed that when each $A_i$ has rank one (for $i\in[n]$) and $x^0 = (1/n) e$, we have 
\begin{equation}
{\max}_{i=1}^n\; \ln(\nabla_i F(\barx^t))\le \frac{\ln(n)}{t+1}, \quad \forall\, t\ge 0,
\end{equation}
where $\barx^t$ is defined in~\eqref{eq:rate_PET}.  Despite this important progress, it is still unclear whether we have the $O(1/t)$ convergence rate in terms of the objective gap for the general case where $\{A_i\}_{i=1}^n$ do not have rank one. In this work, we will provide an affirmative answer to this question.

One may observe that the optimization variables in both~\eqref{eq:PET} and~\eqref{eq:DOPT} 
are vectors living in the unit simplex $\Delta_n$. Quite interestingly, a recent work~\cite{Lin_21} modified~\eqref{eq:MG} to solve  the quantum state tomography (QST) 
problem (see e.g.,~\cite{Hra_04}). Here the optimization variable is a 
matrix living in the spectraplex $\calS_n:=\{X\in\bbH_+^n:\tr(X)= 1\}$, where $\bbH_+^n$ denotes the cone of  complex Hermitian PSD  matrices. Given matrices  $\{A_j\}_{j=1}^m\subseteq \bbH_+^n\setminus\{0\}$ such that $\sum_{j=1}^m A_j \succ 0$, and some probability vector $p
\in\ri\Delta_m$, the QST problem reads 
\begin{align}
 F^*={\max}_{X\in\calS_n}\;  \left\{F(X):=\textstyle \sum_{j=1}^m p_j\ln\big(\ipt{X}{A_j}\big)\right\}.  \tag{QST} \label{eq:QST}
\end{align} 
The modified MG method in~\cite{Lin_21} starts with $X^0 = (1/n)I_n$, and iterates as follows: 
\begin{equation}
\hatX^{t+1}:= \exp\big(\ln(X^t)+\ln(\nabla F(X^t))\big),\quad 
X^{t+1}:= { \hatX^{t+1}}/{\tr(\hatX^{t+1})}, \quad \forall\,t\ge 0,  \label{eq:MG_matrix}
\end{equation}
where $\exp(\cdot)$ and $\ln(\cdot)$ denote matrix exponential and logarithm, respectively. In addition, the authors showed that this method has the following  $O(1/t)$ convergence rate: 
\begin{equation}
F^* - F(\barX^t) \le \frac{\ln(n)}{t+1},  \quad{\rm where} \;\; \barX^t:=\tfrac{1}{t+1}\textstyle 
\sum_{k=0}^t X^k,\quad\forall\, t\ge 0. \label{eq:rate_PET}
\end{equation}
 
 

In summary, the recent progress on the convergence-rate analysis of~\eqref{eq:MG} and its variant (henceforth referred to as the ``MG-type methods'') provides us 
with important computational guarantees of these methods on
~\eqref{eq:PET}, \eqref{eq:DOPT} and~\eqref{eq:QST}. Note that the objective function in all of the three problems have non-Lipschitz gradient over the constraint set, and as mentioned previously, this makes them challenging to be solved by the classical first-order methods (FOMs).  On the other hand,~\eqref{eq:MG} and its variant seem quite effective in solving these ``challenging'' problems. This intriguing observation motivates us to systematically investigate into   the MG-type methods, and develop a general theory for them. Specifically, we aim to 
 formulate a class of convex problems that include~\eqref{eq:PET}, \eqref{eq:DOPT} and~\eqref{eq:QST} as special instances, and  propose and analyze a generalized MG (GMG) method on this class of problems. Ideally, on the aforementioned three specific problems, 
 the GMG method and its computational guarantees 
 should 
 recover  the specific MG-type methods and their computational guarantees above.  


Next, we shall formulate our problem class and introduce another application that falls under this problem class. Before doing so, let us make an important remark to distinguish the MG method above applied to~\eqref{eq:PET} and~\eqref{eq:DOPT}  
 from  the entropic mirror descent (EMD) method 
 (see e.g.,~\cite[Lemma~4]{Nest_05}).  
Although both methods have ``multiplicative'' forms, their details are vastly different. 
Specifically, in each iteration of EMD, the iterate is (element-wise) multiplied with the {\em exponentiated} gradient, and a {\em normalization} step is needed to bring the product back to the unit simplex $\Delta_n$. Furthermore, the computational guarantees of EMD require the objective $F$ to either  have Lipschitz gradient (or function value) over $\Delta_n$ or be ``relatively smooth'' with respect to (w.r.t.) the {\em (negative) entropy function}~\cite{Bauschke_17,Lu_18}, but it turns out that neither condition holds for the objectives in~\eqref{eq:PET} and~\eqref{eq:DOPT}. In fact,  careful thinking reveals that the MG-type methods above not only are {\em fundamentally} different from EMD, but also do not trivially fall under  any well-established optimization paradigms. 





\subsection{Problem Class}\label{sec:prob}

Let $\bbY$ be a finite-dimensional real vector space (with dual space $\bbY^*$), and define $\barbbR:= \bbR\cup\{+\infty\}$.

\begin{definition}[{Legendre  function;~\cite[Definition~2.1]{Bauschke_00}}] \label{def:legendre}
Let $h:\bbY\to \barbbR$ be a proper, 
closed and convex function with $\inter\dom h\ne \emptyset$, where $\dom h:= \{y\in\bbY: h(y)<+\infty\}$. We call $h$ {Legendre} if it is 
 i) {\em essentially smooth}, namely   it is (continuously) differentiable on $\inter\dom h$, and 
 if $\bdry\dom h\ne \emptyset$, then  for any $\{x^k\}_{k\ge 0}\subseteq \inter\dom h$ such that $x^k\to x\in \bdry\dom h$, we have $\normt{\nabla h(x^k)}_*\to +\infty$,
  and ii)  {\em essentially strictly convex}, namely it is strictly convex on $\inter\dom h$.  
\end{definition}

\begin{definition}[{Logarithmically homogeneous function}]\label{def:LH}
Let $h:\bbY\to \barbbR$ be a proper, closed and convex function. We call it $\theta$-logarithmically homogeneous ($\theta$-LH) for some $\theta>0$, if 
\begin{equation}
h(ty) = h(y) - \theta\ln t, \quad \forall\, y\in \dom h,\;\; \forall\, t>0.  \label{eq:LH}
\end{equation}
\end{definition}

Let $(\bbV,\ipt{\cdot}{\cdot})$ be a finite-dimensional real inner-product space. 
Our problem class reads 
\begin{align}
\begin{split}
F^*:= {\max}\;\; &\big\{F(x):= f(\rvA x)\big\}\quad \\
\st\;\;  &x\in\calC:=\{x\in\calK:\tr(x) 
 = 1\}.
\end{split} \tag{P} \label{eq:P}
\end{align}
In~\eqref{eq:P}, $-f:\bbY\to \barbbR$ is a Legendre and $\theta$-LH function for some $\theta>0$ (which is proper, closed and convex by Definition~\ref{def:legendre}), $\rvA:\bbV\to \bbY$ is a linear operator, and $\calK\subseteq \bbV$ is a {\em symmetric cone}, namely it is self-dual and homogeneous. The Jordan algebraic characterization of symmetric cones indicates that there exists a unique Euclidean Jordan algebra  associated with $\calK$, and the trace 
$\tr(\cdot)$ in the description of $\calC$ is defined w.r.t.\ this algebra. 
(We will provide more details in Section~\ref{sec:background}.) 
Typically examples of $\calK$ arising in optimization problems include the nonnegative orthant $\bbR_+^n$, the second-order cone, the real symmetric PSD cone $\bbS_+^n$, the complex Hermitian PSD cone $\bbH_+^n$ and their Cartesian products. In addition, for $x\in \bbR_+^n$, 
$\tr(x):= \sum_{i=1}^n x_i$, and for $X\in \bbS_+^n$ or $\bbH_+^n$, $\tr(X):= \sum_{i=1}^n \lambda_i(X)$, where $\{\lambda_i(X)\}_{i=1}^n$ denote the (real) eigenvalues of $X$.  

We will present and analyze our GMG method for solving~\eqref{eq:P} in Section~\ref{sec:algo}. 
To ensure that~\eqref{eq:P} is well-posed and  this method is well-defined, and also for the analysis of this method, 
we need two additional assumptions on $\rvA$ and $F$. These two assumptions will 
appear in Assumptions~\ref{assump:A} and~\ref{assump:F}, after we introduce some analytic properties of $f$ in Section~\ref{sec:legendre_LH}. For now, 
let us illustrate how to put the three specific problems~\eqref{eq:PET}, \eqref{eq:DOPT} and~\eqref{eq:QST} into the general problem template~\eqref{eq:P}. For~\eqref{eq:PET}, $f:y\mapsto \sum_{j=1}^m p_j\ln(y_j)$ for $y>0$ (and $\theta=\sum_{j=1}^m p_j=1$), $\rvA :x\mapsto (a_j^\top x)_{j=1}^m$ and  $\calK = \bbR_+^n$. For~\eqref{eq:DOPT},  $f:Y\mapsto (1/m) \textstyle \ln\det (Y)$ for $Y\succ0$ (and $\theta=1$), $\rvA :x\mapsto \sum_{i=1}^n x_i A_i$, and $\calK = \bbR_+^n$. For~\eqref{eq:QST}, $f$ is the same as in~\eqref{eq:PET}, $\rvA :X\mapsto (\ipt{A_j}{X})_{j=1}^m$ and  $\calK = \bbH_+^n$. In addition, it is easy to see that the function $-f$ in all of the three problems is indeed Legendre and LH.

\subsection{Another Application of~\eqref{eq:P} } \label{sec:another_app}

This application arises as the dual problem of Nesterov's convex relaxation  of the the Boolean quadratic program (BQP), namely 
\begin{equation}
q^*:= {\max}_{x\in\{\pm 1\}^n}\; x^\top  A x, \quad\where \;\; A\in \bbS_{++}^n, \tag{BQP}
\end{equation}
 and is well-known to be 
NP-hard.  Nesterov~\cite{Nest_98a} showed that the semidefinite relaxation 
\begin{equation}
s^*:={\min}_{y} \;\; \textstyle\sum_{i=1}^n y_i \quad\st \;\;\diag(y)\succeq A \label{eq:SDP_relax}
\end{equation}
provides a $(2/\pi)$-approximation of the BQP, namely $(2/\pi)s^*\le q^*\le s^*$. 
Let $A = LL^\top$ be the Cholesky decomposition of $A$, and let $q_i$ denote the $i$-th column of $L^\top$ for $i\in[n]$. 
In~\cite[Lemma~6]{Nest_11}, 
Nesterov showed that the dual problem of~\eqref{eq:SDP_relax} 
can be equivalently written as 
\begin{equation}
{\max}_{X} \quad   2\ln \textstyle\big(\sum_{i=1}^n \ipt{X}{q_iq_i^\top}^{1/2}\big) \quad  \st\;\; X\in\bbS_+^n, \; \tr(X)=1. \tag{DBQP} \label{eq:DBQP}
\end{equation}
We can fit~\eqref{eq:DBQP} into the problem class~\eqref{eq:P}, by taking $f:y\mapsto 2\ln(\sum_{i=1}^n \sqrt{y_j})$ for $y\in\bbR_{+}^n\setminus\{0\}$ (and $\theta=1$), $\rvA:X\mapsto (\ipt{X}{q_iq_i^\top})_{i=1}^n$ and $\calK = \bbS_+^n$. Also, 
we can easily check that $\barf := -f$ in this case is Legendre and $1$-LH. 
That said, 
note that $\barf$ in~\eqref{eq:DBQP} 
behaves rather differently on 
$\bdry \dom \barf$ 
than that in~\eqref{eq:PET}, \eqref{eq:DOPT} and~\eqref{eq:QST}. 
 Specifically, in those problems, $\barf$ is a convex 
 barrier of $\cl\dom \barf$, namely, $\dom \barf$ is open and $\barf(y^k)\to+\infty$ as $y^k\to y\in \bdry\dom \barf$.
 In contrast, $\barf$ in~\eqref{eq:DBQP} does not possess such a ``barrier'' property, and 
 $\dom \barf$ is neither open nor closed. This makes~\eqref{eq:DBQP} 
 harder to solve by FOMs than the other three problems. In fact, to our knowledge,  the only  FOM that can solve~\eqref{eq:DBQP} with computational guarantees is Nesterov's Barrier Subgradient Method (BSM)~\cite{Nest_11}, whose convergence rate is $O(\ln(t)/\sqrt{t})$.  As we shall see later, our GMG method can solve~\eqref{eq:DBQP} with convergence rate $O(1/{t})$, which is much faster compared to BSM.


\subsection{Main Contributions}

In this work, we develop and analyze the GMG method (cf.~Algorithm~\ref{algo:GMG}) for solving the problem~\eqref{eq:P}. When specialized to~\eqref{eq:PET}, \eqref{eq:DOPT} and~\eqref{eq:QST}, this method recovers the MG-type methods introduced previously. 
We show that with a simple choice of the starting point, this method achieves a convergence rate of $O(\theta\ln(n)/t)$ in terms of the objective gap, where $n$ denotes the rank of the Euclidean Jordan algebra  associated with $\calK$. As a by-product of our analysis, we obtain an interesting upper bound on the objective gap of any point $x\in\ri\calC$ in~\eqref{eq:P} (cf.~Remark~\ref{rmk:x^0_rate}). 

In fact, to our knowledge, our analysis of the GMG method differs vastly from the analysis of almost any FOM in the literature. To conduct our analysis, we establish several results that may be of independent interest. Specifically, in  Section~\ref{sec:legendre_LH}, we derive several convex analytic properties of a Legendre and LH function $h$, 
including a curvature bound of $h$ (cf.~\eqref{eq:curv_ub}) that plays an important role in analyzing  the GMG method. In addition, our  analysis also requires the log-gradient of $F$ 
to be $\calK$-convex on  $\inter\calK$ (cf.\ Assumption~\ref{assump:F}), an assumption that is rather unconventional in the literature of FOMs (or even higher-order methods). In fact, a major focus of this work is to identify several classes of functions that satisfy  this assumption (cf.\ Section~\ref{sec:function}), and hence demonstrate the wide applicability of this assumption. 
To that end, we establish a Cauchy–Schwarz inequality in  representative simple Euclidean Jordan Algebras~\cite{Schm_01}, which is a non-trivial generalization of the ``matrix Cauchy-Schwarz inequality'' (see e.g.,~\cite{Lav_08,Zhao_25}).  

Lastly, in Section~\ref{sec:comp_complexity}, we compare  the computational complexity of the {GMG} method  with three other related FOMs on the four problems introduced above, namely~\eqref{eq:PET}, \eqref{eq:DOPT},~\eqref{eq:QST} and~\eqref{eq:DBQP}.  All of the three FOMs under comparison were developed to solve  convex optimization problems where the objective function lacks the Lipschitz-gradient condition (over the feasible region), but each focuses a different problem class than~\eqref{eq:P}. We show that under certain mild assumptions, the GMG method achieves the best (or almost the best) computational complexities on all of the four problems. 

\subsection{Notations} \label{sec:notations}
Define $\bbR_{+}:=[0,+\infty)$ and $\bbR_{++}:=(0,+\infty)$. Let $\bbS^n$ and $\bbH^n$ denote the set of $n\times n$ real symmetric and complex Hermitian matrices, respectively, and $I_n$ denote the identity matrix of size $n\times n$. Given $\emptyset\ne \calA\subseteq \bbY$, let $\ri\calA$, $\inter\calA$, $\cl\calA $ and $\bdry\calA$ denote the relative interior, interior, closure and boundary of $\calA$, respectively. We call $\calA$ solid if $\inter\calA\ne \emptyset$.  A nonempty closed convex cone $\Omega\subseteq\bbY$ is pointed if $\Omega\cap(-\Omega)=\{0\}$. Define the dual cone of $\Omega$ by $\Omega^*:= \{z\in\bbY^*:\ipt{z}{y}\ge 0,\,\forall\,y\in\Omega\}$, where $\ipt{\cdot}{\cdot}:\bbY^*\times \bbY\to \bbR$ denotes the duality pairing between $\bbY^*$ and $\bbY$. If $\bbY$ is equipped with an inner product, then $\ipt{\cdot}{\cdot}$ is identified with this inner product. In addition, we define the polar cone of $\Omega$ by $\Omega^\circ := -\Omega^*$.  We call $\Omega$ {\em regular} if it is closed, convex, solid and pointed. In this case, $\Omega$ induces a partial order $\succeq_\Omega$ on $\bbY$, 
namely, $a\succeq_\Omega b$ if and only if $a-b \in \Omega$. In addition, $\Omega$ also induces a strict partial order $\succ_\Omega$ on $\bbY$, namely, $a\succ_\Omega b$ if and only if $a-b \in \inter\Omega$. 





\section{Background on Euclidean Jordan algebra} 
\label{sec:background}


For the development and analysis of our GMG method, we briefly provide some background on Euclidean Jordan algebra and symmetric cones. \\[-2ex]

\noindent
{\bf Power-associative algebra.} Let $\bbV$ be a finite-dimensional real vector space endowed with a {\em bilinear} product $\circ:\bbV\times\bbV\to\bbV$, and we call $(\bbV,\circ)$ an  $\bbR$-algebra. 
If there exists $e\in\bbV$ such that $x\circ e = e\circ x = x$ for all $x\in\bbV$, then $e$ is the identity element of $(\bbV,\circ)$.   
If $(\bbV,\circ)$ has identity element and 
$(x\circ x)\circ x = x\circ(x\circ x)$ for all  $x\in \bbV$, then  $(\bbV,\circ)$  is called power associative, 
in which case  we can define   $x^k:= x\circ \cdots \circ x$ ($k$  copies of  $x$) for $k\ge 1$ without ambiguity. We also define $x^0:=e$. In addition, for $x\in\bbV$, let $\bbR[x]$ denote the set of real polynomial in $x$, and note that $(\bbR[x],\circ)$ is a commutative and associative $\bbR$-algebra. We call $x\in\bbV$ {\em invertible}, if there exists 
$y\in \bbR[x]$ such that $y\circ x = e$. 
By the associativity of $(\bbR[x],\circ)$, $y$ is unique. In this case, we call $y$ the inverse of $x$ and denote it by $x^{-1}$.

Let  $(\bbV,\circ)$  be power associative. For each  $x\in \bbV$,  let  $k$  be the smallest positive integer such that the set  $\{e,x,x^2,\ldots ,x^k\}$  is linearly dependent, and we call $k$ the degree of  $x$, 
denoted by $\deg (x) $. Define  $\rank(\bbV):= \max\{\deg(x):x\in\bbV\}$. 
 An element  $x\in\bbV$  is regular if $\deg(x) = \rank(\bbV)$, and we denote the set of regular elements in $\bbV$ by $\calR(\bbV)$. 
  
 Let $\rank(\bbV)=r$. For any $x\in \bbV$, let $m_x$ denote the minimal polynomial of $x$, namely the (unique) {\em monic}  polynomial of the lowest degree such that $m_x(x)=0$. Clearly, the $\deg(m_x) = \deg(x)$, where $\deg(m_x)$  denotes the degree of $m_x$. From~\cite[Proposition~II.2.1]{Faraut_94}, we know that  there exist unique polynomials $a_1,\ldots,a_r:\bbV\to \bbR$ 
 such that $a_k$ is homogeneous of degree $k$ ($k\in[r]$), and 
 for any $x\in \calR(\bbV)$, its minimal polynomial in variable $\lambda$ is   
 \begin{equation}
 m_x(\lambda) = c_x(\lambda) := \lambda^r - a_1(x)\lambda^{r-1} + a_2(x)\lambda^{r-2}+\cdots + (-1)^ra_r(x), 
 \end{equation}
 where $c_x$ is called the {\em characteristic polynomial} of $x\in\calR(\bbV)$. 
 Furthermore, we know that $\calR(\bbV)$ is open and dense, and by the continuity of $\{a_i\}_{i=1}^r$ on $\bbV$, we can extend the definition of  $c_x$ to all $x\in \bbV$. 
 (In fact, for non-regular $x\in\bbV$, $m_x$ divides $c_x$.) The roots of $c_x$ are called the {\em eigenvalues} of $x$ and denoted by $\{\lambda_i(x)\}_{i=1}^r$. 
 In addition, we define the trace and determinant of $x$ as follows: 
 \begin{equation}
\tr(x):= a_1(x)=\textstyle\sum_{i=1}^r \lambda_i(x)   \quad\mbox{and}\quad \det(x):= a_r(x)=\textstyle\prod_{i=1}^r \lambda_i(x)  .  \label{eq:def_det}
\end{equation}
Note that since $a_1$ is a homogeneous polynomial of degree 1, $\tr(\cdot)$ is a linear function on $\bbV.$ In addition, from~\cite[pp.~29]{Faraut_94}, we know that $c_x(\lambda)= \det(\lambda e - x)$ for all $\lambda$ and all $x\in\bbV$. 
\\[-2ex]



\noindent 
{\bf Jordan algebra}.\ Let $(\bbV,\circ)$ be a finite-dimensional $\bbR$-algebra with identity $e$.
{\em Throughout this work, all the algebras that we will deal with 
are assumed to be of this type.} 
 We call it  
a Jordan algebra if for all $x,y\in\bbV$, we have (i) $x\circ y = y\circ x$ and (ii) $x^2 \circ(x\circ y) = x \circ(x^2\circ y)$. 
Clearly, a Jordan algebra is {\em commutative} and {\em power associative}. 
Since $\circ$ is bilinear, for $x\in\bbV$, 
let $L(x):\bbV\to\bbV$ be  the linear operator 
such that $L(x)y:= x\circ y$ for all $y\in\bbV$, and note that $L(\cdot)$ is linear on $\bbV$. 
We call $L(x)$ the {\em linear representation} of $x\in\bbV$. Based on $L(\cdot)$, for any $x\in\bbV$, 
we define the linear operator 
\begin{equation}
P(x):= 2L(x)^2 - L(x^2):\bbV\to\bbV, \label{eq:def_P} 
\end{equation}
as the {\em quadratic representation} of $x$. 
\\[-2ex]

%
%
%
%
%

\noindent 
{\bf Euclidean Jordan algebra}.\ A 
Jordan algebra $(\bbV,\circ)$ 
is called {\em Euclidean} if $\tr(x^2)>0$ for any $x\in\bbV\setminus\{0\}$. 
In this case, the symmetric bilinear form $(x,y)\mapsto \tr(x\circ y)$ 
 is {\em positive definite}, and hence   becomes  an inner product on $\bbV$, which we denote by $\ipt{\cdot}{\cdot}$, namely
\begin{equation}
\ipt{x}{y}:= \tr(x\circ y), \quad \forall\,x,y\in\bbV. \label{eq:tr_ipt}
\end{equation}
(In particular, $\tr(x) = \ipt{e}{x}$ for $x\in\bbV$.) 
In addition, from~\cite[Proposition~II.4.3]{Faraut_94}, we know that the inner product $\ipt{\cdot}{\cdot}:\bbV\times\bbV\to\bbR$ defined in~\eqref{eq:tr_ipt} is {\em associative}, namely   
\begin{equation}
\ipt{x}{y\circ z} = \ipt{x\circ y}{z}, \quad \forall\,x,y,z\in\bbV. 
\end{equation}
This implies that for any  $x\in\bbV$,  both the linear operators $L(x)$ and $P(x)$ are {\em self-adjoint} w.r.t.\ the inner product $\ipt{\cdot}{\cdot}$.
Let $\normt{\cdot}$ be the norm induced by $\ipt{\cdot}{\cdot}$, namely $\normt{x}:= {\ipt{x}{x}}^{1/2}$ for  $x\in\bbV$. 

Let $(\bbV,\circ)$ 
be a Euclidean Jordan algebra with rank $r$. We call $q\in\bbV$ an  idempotent if $q^2 = q$, and two idempotents $q$ and $q'$ orthogonal if $q\circ q'=0$. We call 
$\{q_i\}_{i=1}^k\subseteq\bbV$ (where $k\in [r]$) a complete system of  orthogonal idempotents (CSOI) if (i)  $q_i^2 = q_i$ and  $q_i\ne 0$ for all $i\in[k]$, (ii) $q_i\circ q_j = 0$ for all $i\ne j$,  $i,j\in[k]$ and (iii) $\sum_{i=1}^k q_i=e$. From~\cite[Proposition~III.1.1]{Faraut_94}, we know that for any $x\in\bbV$ with degree $k\in [r]$, there exist {\em  unique distinct real} numbers $\{\mu_j\}_{j=1}^k$ and a {\em unique} CSOI $\{c_j\}_{j=1}^k$ such that $x = \sum_{j=1}^k \mu_j c_j$. 
This is called the  
{\em  type-I spectral decomposition} of $x$, 
and $\{\mu_j\}_{j=1}^k$  are the distinct (real) eigenvalues of $x$. 

In addition, we call an idempotent $q\in\bbV$ {\em primitive} if it is non-zero and cannot be written as the sum of two non-zero idempotents. We call $\{q_i\}_{i=1}^r$ a {\em Jordan frame} if it is a CSOI and each $q_i$ is primitive, for $i\in[r]$. From~\cite[Proposition~III.1.1]{Faraut_94}, we know that for any $x\in\bbV$, there exist real numbers $\{\lambda_i\}_{i=1}^r$ and a Jordan frame $\{q_i\}_{i=1}^r$ such that $x = \sum_{i=1}^r \lambda_i q_i$. 
This is a   {\em  type-II spectral decomposition} of $x$ (and may not be unique). 
Note that $\{\lambda_i\}_{i=1}^r$ are the (real) eigenvalues of $x$ including multiplicities. In addition, we call $\{q_i\}_{i=1}^r$ the Jordan frame of $x$. 
In fact, one can observe a simple connection between the type-I and type-II spectral decompositions: let $x\in\bbV$ 
with  $\deg(x) = k$, then 
there exists a partition $\{\calI_j\}_{j=1}^k$ of $[r]$ such that for any $j\in[k]$, $\lambda_i = \mu_j$ for all $i\in\calI_j$ and $c_j = \sum_{i\in\calI_j} q_i$. The uniqueness of the type-I spectral decomposition then implies that $\{\lambda_i\}_{i=1}^r$ 
are uniquely determined by $x$.

 In the sequel, 
 any spectral decomposition we mention
will be of type II, unless otherwise stated.
From above, we easily see that 
i) the all the $r$ eigenvalues of $e$ 
are equal to one, and hence $\tr(e)= r$ and $\det(e) = 1$, and (ii) any primitive idempotent $q$ has one unit eigenvalue  and $r-1$ zero eigenvalues, and hence $\normt{q}^2 = \tr(q^2)= \tr(q)= 1$ and $\det(q) = 0$. 
\\[-2ex] 
 
 \noindent 
{\bf Functions of $x\in\bbV$}.\ 
 The spectral decomposition above enables us to extend the definition of any univariate 
 function $h:\calD_h\to \bbR$ (where $\calD_h\subseteq \bbR$) to any element $x\in\bbV$: 
 let $x= \sum_{i=1}^r \lambda_i q_i$ be a spectral decomposition of $x$, if  $\{\lambda_i\}_{i=1}^r\subseteq \calD_h$, then 
\begin{equation}
h(x) := \textstyle\sum_{i=1}^r h(\lambda_i) q_i.   \label{eq:func_x}
\end{equation}
By the connection between the type-I and type-II spectral decompositions, we know that $h(x)= \textstyle\sum_{j=1}^k h(\mu_j) c_j$, and hence is uniquely defined. \\[-1.5ex]

\noindent 
{\bf Symmetric cones}.\ Let $(\bbV,\ipt{\cdot}{\cdot})$ be a (finite-dimensional) real inner-product space.  
We call $\calK$ {\em self-dual} if $\calK = \calK^*$ (where $\calK^*$ is defined w.r.t.\ the inner product $\ipt{\cdot}{\cdot}$). Note that a self-dual cone is {\em regular} (cf.~Section~\ref{sec:notations}). 
In addition, let $\Aut(\calK)$ denote the set of linear automorphisms on $\calK$, namely 
$\Aut(\calK):=\{{\rvL}\in\GL(\bbV):\rvL(\calK)=\calK\},$ 
 where $\GL(\bbV)$ denotes the general linear group of $\bbV$. We call a solid cone $\calK$ homogeneous if for all $x,y\in\inter\calK$, there exits $\rvL\in \Aut(\calK)$ such that $\rvL x  = y$. We call $\calK$ symmetric if it is both self-dual and homogeneous.  Note that by the self-duality of $\calK$, we have $\ipt{x}{y}\ge 0$ for all $x,y\in\calK$.

Let $(\bbV,\circ)$ 
be a Euclidean Jordan algebra. Define its cone of squares $\calK(\bbV):=\{x^2:x\in\bbV\}$. From~\cite[Theorem~III.2.1]{Faraut_94}, we know that $\calK(\bbV)$ is symmetric. Conversely, from~\cite[Theorem~III.3.1]{Faraut_94}, we know that any symmetric cone is the cone of squares of some Euclidean Jordan algebra. Therefore, we have the {\em Jordan algebraic characterization of symmetric cones}: a cone is symmetric if and only if it is the cone of squares of some Euclidean Jordan algebra. In fact, there exists a one-to-one correspondence between symmetric cones and Euclidean Jordan algebras. In addition, 
we know that  $\calK(\bbV)= \{x\in \bbV:\lambda_{\min}(x)\ge 0\}$ and $\inter\calK(\bbV)= \{x\in \bbV:\lambda_{\min}(x)> 0\}$, where $\lambda_{\min}(x)$ denotes the minimum eigenvalue of $x\in \bbV$ (see e.g.,~\cite[Proposition~2.5.10]{Vieira_07}). As a result, we know that $e\in\inter\calK$ and $q\in\calK$, where $q$ is any primitive idempotent in $\bbV$. 
 \\[-2ex]

\noindent 
{\bf Simple Euclidean Jordan algebras and irreducible symmetric cones}.\ Let $(\bbV,\circ)$ be a Jordan algebra. We call a linear subspace $\bbD\subseteq\bbV$ an ideal of $\bbV$ if for all $x\in \bbD$ and $y\in \bbV$, $x\circ y\in \bbD$. We call $(\bbV,\circ)$ {\em simple} if $\bbV\ne \{0\}$ and the only ideals are $(\{0\},\circ)$ and $(\bbV,\circ)$. Let $\{(\bbV_i,\circ)\}_{i=1}^n$ be Jordan sub-algebras of $(\bbV,\circ)$ such that each $(\bbV_i,\circ)$ has 
identity element $e_i\in \bbV_i$, for $i\in[n]$. 
We say that $(\bbV,\circ)$ is the direct sum of $\{(\bbV_i,\circ)\}_{i=1}^n$ if $\bbV$ is the direct sum of the subspaces $\{\bbV_i\}_{i=1}^n$ (denoted by $\bbV = \bigoplus_{i=1}^n \bbV_i$) and $\bbV_i\circ \bbV_j = \{0\}$ for any $i\ne j$, $i,j\in[n]$. 
From~\cite[Proposition III.4.4]{Faraut_94}, we know that any Euclidean Jordan algebra can be written as the direct sum of 
simple Euclidean Jordan sub-algebras in a {\em unique} way. 

Let $(\bbV,\ipt{\cdot}{\cdot})$ be a real inner-product space. 
We call a symmetric cone $\calK\subseteq\bbV$ {\em irreducible} if there do not exist non-trivial subspaces $\bbV_1$ and $\bbV_2$ of $\bbV$ such that $\bbV = \bbV_1\bigoplus \bbV_2$ and symmetric cones $\calK_1\subseteq\bbV_1$ and $\calK_2\subseteq\bbV_2$ such that $\calK = \calK_1 + \calK_2$. A symmetric cone can be written as the direct sum of irreducible symmetric cones in a unique way. In fact, a symmetric cone is irreducible if and only if it is the cone of squares of some simple Euclidean Jordan algebra, and there exists a one-to-one correspondence between irreducible symmetric cones and simple Euclidean Jordan algebras. 

From~\cite[Chapter~V]{Faraut_94}, the simple Euclidean Jordan algebras (and hence irreducible symmetric cones) can be completely classified into the five kinds in the following:

\begin{enumerate}[label=\roman*)]
\item The Jordan spin algebra $\bbL^{n+1}$: $\bbV=\bbR^{n+1}$ and for $x\in \bbR^{n+1}$, denote $x:= (x_0,\barx)\in \bbR\times \bbR^n$. For any $x,y\in \bbR^{n+1}$, $x\circ y:= (x^\top y, x_0\bary+y_0\barx)$, $\tr(x)= 2x_0$ and $\ipt{x}{y} = 2x^\top y$. 
The identity element $e= (1,0)\in \bbR\times \bbR^n$ and the cone of squares $\calK(\bbV):= \{x\in\bbR^{n+1}: x_0\ge \normt{\barx} \}$, namely the second-order cone. Lastly, $\rank(\bbV)=2$.  

\item $\mathrm{Herm}(n,\bbF)$, where $\bbF = \bbR$, $\bbC$ or $\bbH$ (and $\bbH$ denotes quaternion): $\bbV$ 
 is the space  of $n\times n$ Hermitian matrices with entries in $\bbF$.  For any $X,Y\in\bbV$, 
 $X\circ Y := (XY+YX)/2$, $\tr(X)$ is defined in the usual matrix sense and $\ipt{X}{Y}: =\tr(XY)$. 
The identity element $e:= I_n$ and $\calK(\bbV) = \calH_+^n(\bbF)$,  i.e., the cone of $n\times n$ Hermitian PSD matrices with entries in $\bbF$. Lastly, $\rank(\bbV)=n$.

\item $\mathrm{Herm}(3,\bbO)$ (where $\bbO$ denotes octonion): $\bbV$ is the space of $3\times 3$  Hermitian matrices with entries in $\bbO$. The {bilinear} product $\circ$, $\tr(\cdot)$, inner product $\ipt{\cdot}{\cdot}$ 
and identity element are defined in the same way as in $\mathrm{Herm}(n,\bbF)$, and  $\calK(\bbV)= \calH_+^3(\bbO)$. Lastly, $\rank(\bbV)=3$.  
\end{enumerate}

\noindent 
Besides the five simple Euclidean Jordan algebras above, another Euclidean Jordan algebra that we will frequently encounter in this work is $(\bbR^n, \circ)$, 
where for $x,y\in \bbR^n$, $x\circ y = (x_iy_i)_{i=1}^n$, $\tr(x) = \sum_{i=1}^n x_i$, 
  $\ipt{x}{y}= \sum_{i=1}^n x_iy_i$, and the identity $e = (1,1,\ldots,1)$.  Also, for $x\in \bbR^n$, its eigenvalues are $\{x_i\}_{i=1}^n$ and its Jordan frame  is 
  $\{e_i\}_{i=1}^n$. (Note that all $x\in \bbR^n$ have the same Jordan frame.) 
Indeed, this Euclidean Jordan algebra can be viewed as the Cartesian product of $n$ copies of $\mathrm{Herm}(1,\bbR)$, and hence we denote it by $\mathrm{Herm}(1,\bbR)^n$. 



\section{Some Convex Analytic Properties of Legendre and LH Functions} \label{sec:legendre_LH}

In this section, we shall establish some convex analytic properties of Legendre and LH functions, which not only play important roles in the development and analysis of our GMG method, but also may be independent interest. 
To start, let us present some very nice properties of Legendre functions. 
Given a proper, closed and convex function $h:\bbY\to \barbbR$, define its Fenchel conjugate $h^*(z) := {\sup}_{y\in\bbY} \; \ipt{z}{y} - h(y)  $, where $z\in\bbY^*$.

\begin{lemma}[{\cite[Theorem~26.5]{Rock_70}}]\label{lem:legendre}
Let  $h:\bbY\to \barbbR$ be a proper, closed and convex function. Then $h$ is Legendre if and only if $h^*$ 
is Legendre. Furthermore, $\nabla h:\inter\dom h\to \inter\dom h^*$ is a homeomorphism, whose inverse $(\nabla h)^{-1} = \nabla h^*$. 
\end{lemma}

\noindent
We also need the following simple lemma in our proof. 

\begin{lemma} \label{lem:simplex}
Let $\calA\subseteq\bbV$ be a nonempty convex set such that $\Omega:= \cl \calA$ is a closed, convex, pointed cone that strictly contains $0$ (i.e., $\{0\}\subsetneq \Omega$). Let $s\in \inter \Omega^*$ and define $\calQ_s:= \{x\in\calA:\ipt{s}{x}=1\}$. Then we have 
\begin{enumerate}[leftmargin = 5ex,label=(\roman*)]
\item \label{item:ri} $\ri\calQ_s= \{x\in\ri\Omega: \ipt{s}{x}=1\}$, and both $\ri\calQ_s$ and $\calQ_s$ are nonempty, convex  and bounded;
\item \label{item:cl} $\cl\calQ_s= \{x\in\Omega:\ipt{s}{x}=1\}$, which  is nonempty, convex  and compact. 
\end{enumerate}
\end{lemma}

\begin{proof}
See Appendix~\ref{app:proof_simplex}.  
\end{proof}

\noindent
Define $\Gamma:=\cl\dom h$. The convex analytic properties of Legendre and LH functions are shown in the following theorem. 

\begin{theorem} \label{thm:Legendre_LH}
Let $h:\bbY\to \barbbR$ be Legendre and $\theta$-LH (cf.~Definitions~\ref{def:legendre} and~\ref{def:LH}). Then

\begin{enumerate}[leftmargin = 4ex,label=(\roman*)]
\item \label{item:pos_scaling} $\dom h$ is invariant under positive scaling (i.e., $t \,\dom h = \dom h$ for all $t>0$), and $0\not\in\dom h$. 
\item \label{item:Gamma} $\Gamma$ is a closed, convex and solid cone, and $\inter\Gamma = \inter \dom h$ is an open convex cone. 
\item \label{item:h^*} $h^*$ is Legendre and  $\theta$-LH, 
$\cl\dom h^* = \Gamma^\circ$ and hence 
$\inter\dom h^* = \inter\Gamma^\circ$. In addition, both $\Gamma$ and $\Gamma^\circ$ are regular cones.  
\item \label{item:theta} For any $y\in \inter\dom h$, we have  $\ipt{\nabla h(y)}{y}=-\theta$ and hence $h^*(\nabla h(y)) + h(y) = -\theta$. 
\item \label{item:curvature} For all $y\in \dom h$ and $s\in\inter\dom h^*$, we have 
\begin{equation}
\Phi(y,s):= h(y) + h^*(s) + \theta\ln(-\ipt{s}{y})\ge \theta \ln \theta - \theta.   \label{eq:Phi_lb} 
\end{equation}
As a result, for all $y'\in \inter\dom h$ and $y\in \dom h$, we have
\begin{equation}
h(y') - h(y)\le \theta\ln\big({-\ipt{\nabla h(y')}{y}}/{\theta}\big). \label{eq:curv_ub}
\end{equation}
\end{enumerate}
\end{theorem}

\begin{proof}
{\em Proof of Part~\ref{item:pos_scaling}}:   From Definition~\ref{def:LH}, if $y\in \dom h$, then for all $t>0$,  $ty\in \dom h$, so $t \,\dom h \subseteq \dom h$ for all $t>0$.  Hence $t^{-1} \,\dom h \subseteq \dom h$ for all $t>0$, and we have $t \,\dom h = \dom h$ for all $t>0$. Now  suppose that $0\in \dom h$, then we have $h(0) = h(0) - \theta\ln t$ for all $t>0$, 
which is a contradiction.\\[-2ex] 

\noindent 
{\em Proof of Part~\ref{item:Gamma}}:
 note that by Definitions~\ref{def:legendre} and~\ref{def:LH}, $\inter\dom h\ne \emptyset$ is an open convex cone.
  Therefore, $\Gamma=\cl \dom h = \cl( \inter\dom h)$ is a  solid closed convex cone, 
  and $\inter\Gamma = \inter\dom h$.  
  To prove Part~\ref{item:h^*}, note that the Legendreness of $h^*$ follows from Lemma~\ref{lem:legendre}. Also, for any $t>0$, 
\begin{align}
h^*(ts) = \sup_{y\in\dom h} \, t\ipt{s}{y} - h(y) &= \sup_{y\in\dom h} \, \ipt{s}{ty} - h(ty) - \theta \ln t \label{eq:h^*_1}\\
& \eqa \sup_{u\in\dom h} \, \ipt{s}{u} - h(u) - \theta \ln t = h^*(s) - \theta \ln t, 
\end{align}
where in (a) we use Part~\ref{item:pos_scaling}. This shows that $h^*$ is $\theta$-LH. 
Next, we show that $\dom h^* \subseteq \Gamma^\circ$.   Since $\Gamma^\circ =\{s\in \bbY^*:\ipt{s}{y}\le 0, \,\forall\,y\in\inter\Gamma\}$, for any $s\not\in \Gamma^\circ$, 
there exists $y\in \inter\Gamma$ 
such that $\ipt{s}{y}>0$, and since $\inter\Gamma$ is an open convex cone, we have $\{ty:t>0\}\subseteq \inter\Gamma = \inter \dom h$. By definition, 
\begin{align*}
h^*(s)\ge {\sup}_{t>0}\;  t\ipt{s}{y} - h(ty) = {\sup}_{t>0}\; t\ipt{s}{y} + \theta\ln t - h(y) = +\infty, 
\end{align*}
and 
hence $s\not\in \dom h^*$. This shows $\dom h^* \subseteq \Gamma^\circ$, and hence $\inter \dom h^*\subseteq\cl\dom h^* \subseteq \Gamma^\circ$. Since $h^*$ is Legendre, $\inter \dom h^*\ne \emptyset$, and hence $\Gamma^\circ$ is solid. Since $\Gamma$ is closed and convex, $\Gamma$ is pointed (resp.~solid) if and only if $\Gamma^\circ$ is solid (resp.~pointed). This shows that both $\Gamma$ and $\Gamma^\circ$ are regular. \\[-2ex]


\noindent 
{\em Proof of Part~\ref{item:h^*}}:
Next, we show that $h^*$ is differentiable on $\inter \Gamma^\circ$. By the Legendreness of $h^*$, 
this implies that $\inter \Gamma^\circ\subseteq \inter\dom h^*$, and hence $\Gamma^\circ\subseteq \cl \dom h^*$.  Take any $s\in \inter \Gamma^\circ$, and define $\calQ_s :=  \{y\in\dom h: \ipt{s}{y}=-1\}$. Note that  $\dom h = \{ty:y\in \calQ_s,t>0\}$, which easily follows from Part~\ref{item:pos_scaling}. 
Indeed, since $0\not\in\dom h$ and $\dom h\subseteq \Gamma$, for any $y\in\dom h$, we have $\ipt{s}{y}<0$, and hence 
$\bary:= y/(-\ipt{s}{y})\in \calQ_s$. 
As a result, $y = (-\ipt{s}{y}) \bary\in \{ty:y\in \calQ_s,t>0\}$. 
 Due to this, we have 
\begin{align}
h^*(s) = \sup_{y\in\dom h} \, \ipt{s}{y} - h(y) &= \sup_{y\in\calQ_s}\sup_{t>0} \; t\ipt{s}{y} - h(ty), \label{eq:h^*_s0} 
\\
&= \sup_{y\in\calQ_s}\sup_{t>0} \; -t - h(y) + \theta \ln t=   \theta \ln \theta - \theta -\inf_{y\in\calQ_s}\; h(y) , \label{eq:h^*_s}
\end{align}
Define $\calY^*:= \argmin \{h(y): y\in\calQ_s\},$
and note that $\partial h^*(s) =\theta \calY^*$. 
Since $\Gamma$ is regular, by Lemma~\ref{lem:simplex}, we know that $\cl \calQ_s=\{y\in\Gamma: \ipt{s}{y}=-1\}$ is a nonempty, convex and compact set. 
Note that since $\inter\dom h\cap \calQ_s=\inter\Gamma\cap \calQ_s\ne \emptyset$ and $h$ is essentially smooth, 
we know that (i) $\calY^*\subseteq\inter\dom h$
and (ii) $\calY^* =  \argmin \{h(y): y\in \cl\calQ_s\}$. 
Since $h$ is proper and closed and $\cl\calQ_s$ is compact, we know that $\calY^*\ne\emptyset$. Since $\calY^*\subseteq\inter\dom h$,  by the essential strict convexity of $h$, $\calY^*$ is a singleton. As a result, $\partial h^*(s)$ is a singleton and hence $h^*$ is differentiable at $s\in \inter \Gamma^\circ$.\\[-2ex] 

\noindent 
{\em Proof of Part~\ref{item:theta}}:
Note that by differentiating both sides of~\eqref{eq:LH} w.r.t.\ $t$ and setting $t=1$, we have $\ipt{\nabla h(y)}{y}=-\theta$ for all $y\in \inter\dom h$. The second identity follows from~\cite[Theorem~23.5]{Rock_70}. \\[-2ex] 


\noindent 
{\em Proof of Part~\ref{item:curvature}}:
Fix any  $y\in \dom h\subseteq \Gamma$ and $s\in\inter\Gamma^\circ$, and let $\bary:= y/(-\ipt{s}{y})\in \calQ_s$  be given in the proof of Part~\ref{item:h^*}. 
By~\eqref{eq:h^*_s}, we have 
\begin{equation}
h^*(s) \ge  \theta \ln \theta - \theta- h(\bary) =  \theta \ln \theta - \theta- h(y) - \theta \ln(-\ipt{s}{y}).
\end{equation} 
This proves~\eqref{eq:Phi_lb}. 
Now, for any $y'\in \inter\dom h$, if we set $s = \nabla h(y')\in \inter\dom h^*$ in~\eqref{eq:Phi_lb}, then by using Part~\ref{item:theta} and rearranging, we obtain~\eqref{eq:curv_ub}. 
\end{proof}

\begin{corollary} \label{cor:error_bd}
Let $h:\bbY\to \barbbR$ be Legendre and $\theta$-LH, and $\calY$ be a compact set such that $\calY\cap\dom h\ne \emptyset$. Define $\calY^*:= \argmin_{y\in\calY}\, h(y)$. Then $\calY^*\ne \emptyset$, 
and for any $y^*\in \calY^*$, 
\begin{equation}
h(y) - h(y^*)\le \theta\ln\big({-\ipt{\nabla h(y)}{y^*}}/{\theta}\big), \quad \forall\,  y\in \inter\dom h \ . 
\end{equation}
\end{corollary}

\begin{proof}
Since $\calY\cap\dom h\ne \emptyset$, $h$ is proper and closed, and $\calY$ is compact, we know that $\calY^*\ne \emptyset$ and $\calY^*\subseteq\dom h$. 
We then invoke~\eqref{eq:curv_ub} in Theorem~\ref{thm:Legendre_LH} to finish the proof. 
\end{proof}

%

%
%

\section{The GMG Method} \label{sec:algo}

For the convenience of subsequent analysis, let us introduce a few notations. First, we let $(\bbV,\circ)$ 
 be the Euclidean Jordan algebra associated with $\calK$ in~\eqref{eq:P}, and denote its identity element by $e$ and its rank by $n$. 
Second, we  denote the set of optimal solution of~\eqref{eq:P}  by $\calX^*$. 
Third, define 
\begin{equation}
\barF := -F,\quad  \barf := -f \quad\andd\quad \Gamma:=\cl\dom \barf,  \label{eq:def_-F}
\end{equation}
and since $\barf$ 
 is  Legendre and LH, by Theorem~\ref{thm:Legendre_LH}\ref{item:h^*}, both $\Gamma$ and $\Gamma^*$ are  regular cones. 
Finally, to avoid triviality, 
we assume that both vector spaces $\bbV$ and $\bbY$ are nontrivial. 

\subsection{Additional Assumptions and Their Implications}

\begin{assump}\label{assump:A}
$\rvA:\bbV\to \bbY$ is a linear operator that satisfies 
$\rvA(\inter\calK)\subseteq \inter\Gamma$, and its adjoint $\rvA^*:\bbY^*\to \bbV$ satisfies that $\rvA^*(\inter\Gamma^*)\subseteq \inter\calK$. 
\end{assump}

\noindent 
The implications of Assumption~\ref{assump:A} are shown in Proposition~\ref{prop:well_posed} below. In particular,   Assumption~\ref{assump:A} ensures that~\eqref{eq:P} is well-posed. As we shall see later in Section~\ref{sec:algo}, Assumption~\ref{assump:A} also ensures that our GMG method is well-defined.  

\begin{prop} \label{prop:well_posed}
Under Assumption~\ref{assump:A}, \eqref{eq:P} is feasible 
and $\calX^*\ne \emptyset$.   
In addition,  the function $\barF:\bbV\to \barbbR$ is proper, closed, convex, and  differentiable on $\inter\calK$.  Moreover, $\nabla f: \inter \Gamma \mapsto \inter \Gamma^*$, and for all $x\in \inter\calK$, $\nabla F(x)\in \inter\calK$ and $\ipt{\nabla F(x)}{x} = \theta$. 
\end{prop}

\begin{proof}
Let $x=(1/n)e$, 
then $\tr(x) = 1$ and $x\in \inter\calK$, and hence $x\in\calC$. 
Under Assumption~\ref{assump:A}, we know that $\rvA x\in \inter\Gamma = \inter\dom \barf$, and hence $x\in \dom \barF $. Since $\barf$ is proper, closed, convex, this implies that $\barF$ is also proper, closed, convex. Since $e\in \inter\calK$ and $\calK$ is regular, by Lemma~\ref{lem:simplex}, we know that $\calC\ne\emptyset$ is convex and compact. Since $\dom \barF \cap \calC\ne\emptyset$, 
we know that $\calX^*\ne \emptyset$. Finally, for all $x\in \inter\calK$, $\rvA x\in  \inter \Gamma = \inter\dom \barf$, on which $\barf$ is differentiable (since it is Legendre). 
This implies that $F$ is differentiable on $\inter\calK$.  By Theorem~\ref{thm:Legendre_LH}\ref{item:h^*}, we know that $\nabla \barf: \inter \Gamma \mapsto  \inter\Gamma^\circ$, and hence 
$\nabla f: \inter \Gamma \mapsto \inter \Gamma^*$. Thus for all $x\in \inter\calK$, $\nabla F(x)= \rvA^*\nabla f(\rvA x) \in \rvA^*(\inter\Gamma^*)\subseteq \inter\calK$ (by Assumption~\ref{assump:A}). In addition, by Theorem~\ref{thm:Legendre_LH}\ref{item:theta}, we have  
$\ipt{\nabla F(x)}{x} = \ipt{\rvA^*\nabla f(\rvA x)}{x} = \ipt{\nabla f(\rvA x)}{\rvA x} = \theta$. 
\end{proof}

\noindent 
Under Assumption~\ref{assump:A}, we can present our second assumption. Note that unlike Assumption~\ref{assump:A}, this assumption is only used in the convergence-rate analysis of the GMG method. 

\begin{assump}
\label{assump:F}
The log-gradient $\ln(\nabla F(\cdot))$ 
is $\calK$-convex on  $\inter\calK$, i.e., for any $x,y\in\inter\calK,$ 
\begin{align}
\ln(\nabla F(\lambda x + (1-\lambda)y)) \preceq_{\calK}\, \lambda\ln(\nabla F(x)) + (1-\lambda)\ln(\nabla F(y)), \; \forall\,\lambda\in[0,1]. 
\label{eq:log_grad_convex}  
\end{align} 
\end{assump}

\noindent 
At first glance, Assumption~\ref{assump:F} seems quite unconventional. 
 To our knowledge, 
this assumption has not appeared in the analysis of any first-order or higher-order methods (except in some special cases), and one may naturally wonder if this assumption is mild enough to be satisfied by some meaningful function classes. 
In Section~\ref{sec:function}, we will study this assumption in details, and identify several classes of functions that satisfy  Assumption~\ref{assump:F}. 

\subsection{The GMG Method}\label{sec:algo}

\begin{algorithm}[t!]
\caption{The Generalized Multiplicative Gradient (GMG) Method}\label{algo:GMG}
\begin{algorithmic}
\State {\bf Input}: starting point $x^0\in \ri\calC$, exponent parameter $\alpha\in(0,1]$ \phantom{r} \vspace{1ex}
\State {\bf At iteration $t\ge 0$}:
\begin{align}
\hatx^{t+1}&:= \exp\big\{\ln\big(x^t\big)+\ln\big(\nabla F(x^t)^\alpha\big)\big\}\label{eq:GMG1} \\
x^{t+1}&:= { \hatx^{t+1}}/{\tr(\hatx^{t+1})} \label{eq:GMG2} 
\end{align}
\end{algorithmic}\vspace{-.2cm}
\end{algorithm}


Our GMG method is shown in Algorithm~\ref{algo:GMG}, 
which has an extremely simple structure. 
Note that the functions $\exp(\cdot)$, $\ln(\cdot)$ and $(\cdot)^\alpha$ in~\eqref{eq:GMG1} are applied to $x\in\bbV$ in the sense of~\eqref{eq:func_x}. 
 Let us make a few remarks about  Algorithm~\ref{algo:GMG}. First, note that under Assumption~\ref{assump:A}, Algorithm~\ref{algo:GMG} is indeed well-defined. From Lemma~\ref{lem:simplex}, we know that 
\begin{equation}
\ri\calC= \calH \cap\inter\calK, \quad \where \;\;\;  \calH: =\{x\in\bbV: \tr(x)=1\}. 
\end{equation} 
 Since $x^0\in \inter\calK$ and $\nabla F(x)\in \inter\calK$ for all $x\in \inter\calK$ (cf.\ Proposition~\ref{prop:well_posed}), using induction, we see that at each iteration $t\ge 0$, $x^t\in \ri\calC$  and all the operations in~\eqref{eq:GMG1} and~\eqref{eq:GMG2} are well-defined. 

 Second, 
 with the exponent parameter $\alpha=1$, Algorithm~\ref{algo:GMG} recovers the MG method~\eqref{eq:MG} and its variant~\eqref{eq:MG_matrix} for solving the specific problem instances~\eqref{eq:PET},~\eqref{eq:DOPT} and~\eqref{eq:QST}. In particular, when $\calK = \bbR_+^n$, the Euclidean Jordan algebra associated with $\calK$ is $\mathrm{Herm}(1,\bbR)^n$ (cf.\ last part of Section~\ref{sec:background}).  
  In this case,~\eqref{eq:GMG1} and~\eqref{eq:GMG2} in Algorithm~\ref{algo:GMG} simplifies to 
 \begin{align}
\hatx^{t+1}:= x^t\odot \nabla F(x^t)^\alpha, \qquad x^{t+1}:= { \hatx^{t+1}}/\ipt{e}{\hatx^{t+1}}. \label{eq:GMG_R^n}
\end{align} 
When $\alpha=1$ and $\theta=1$, by Proposition~\ref{prop:well_posed}, we have $\ipt{e}{\hatx^{t+1}} = \ipt{x^t}{\nabla F(x^t)} = 1$, and the second step in~\eqref{eq:GMG_R^n} can be omitted. Hence Algorithm~\ref{algo:GMG} reduces to the MG method in~\eqref{eq:MG}. 

Third, the additional exponent parameter $\alpha\in(0,1]$ adds more flexibility to the GMG method, and the inspiration is drawn from the literature of the MG method for  optimal design problems under the general optimality criteria (see e.g.,~\cite{Silvey_78,Yu_10}). 
 In particular, for the D-optimal design problem~\eqref{eq:DOPT}, 
 Yu~\cite[Corollary~1]{Yu_10} showed that for all $\alpha\in(0,1]$, the iterates $\{x^t\}_{t\ge 0}$ generated by Algorithm~\ref{algo:GMG}  converge to the set of 
 optimal solutions   (see also~\cite[Corollary 2.1]{Lu_13}).  
On the other hand, for~\eqref{eq:DOPT}, there largely lacks a  theoretical understanding of the dependence of the convergence rate of Algorithm~\ref{algo:GMG} on $\alpha$, and due to this, the ``optimal'' $\alpha$ is usually chosen on a ad-hoc basis (see e.g,.~\cite{Silvey_78}). Interestingly, our convergence rate analysis of Algorithm~\ref{algo:GMG} (cf.~Theorem~\ref{thm:rate}) offers new insight into this issue. Specifically, our analysis suggests that the optimal choice of $\alpha$ in this case should be 1 (cf.~Remark~\ref{rmk:optimal_choice}).  

 

\subsection{Convergence Rate Analysis} 

To analyze the convergence rate of Algorithm~\ref{algo:GMG}, we will need a few  lemmas. The first one below is the Golden-Thompson (GT) inequality in Euclidean Jordan algebra~\cite[Theorem 5.1]{Tao_21}, which is an extension of the GT inequality for complex Hermitian matrices~\cite{Golden_65,Thompson_65} . 

\begin{lemma}[{\cite[Theorem 5.1]{Tao_21}}] \label{lem:GT}
For any $x,y\in\bbV$, we have 
\begin{equation}
\tr(\exp(x+y))\le \tr(\exp(x)\circ \exp(y)).
\end{equation}
\end{lemma}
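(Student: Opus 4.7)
The classical Golden--Thompson inequality $\tr(\exp(A+B)) \le \tr(\exp(A)\exp(B))$ for Hermitian matrices admits a natural Jordan-algebraic reformulation once one replaces the associative product with the Jordan product. The plan is to reduce the general EJA statement to the classical matrix inequality via the structure theory of EJAs, in three steps.

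\textbf{Step 1 (Reduction to simple EJAs).} Using the decomposition $\bbV = \bigoplus_{i=1}^k \bbV_i$ into simple Euclidean Jordan subalgebras together with the defining property $\bbV_i \circ \bbV_j = \{0\}$ for $i \ne j$, every spectral decomposition splits along the summands. Consequently $\exp(\cdot)$ and $\tr(\cdot)$ are additive across the direct sum and the inequality decouples, so it suffices to prove it when $\bbV$ is simple.

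\textbf{Step 2 (Special simple EJAs).} Four of the five simple EJAs enumerated in Section~\ref{sec:simple_EJA}---namely $\bbL^{n+1}$ (via a Clifford-algebra representation) and $\mathrm{Herm}(n,\bbR)$, $\mathrm{Herm}(n,\bbC)$, $\mathrm{Herm}(n,\bbQ)$ (via the standard Hermitian-matrix representations)---are \emph{special}, i.e.\ each embeds as a Jordan subalgebra of an associative real algebra $\calA$ in which the Jordan product coincides with the symmetrized product $(xy+yx)/2$. Under such an embedding, Jordan powers coincide with associative powers, so the spectral $\exp(\cdot)$ agrees with the associative exponential, and the EJA trace agrees with a positive scalar multiple of the associative trace (this requires a short computation in each case). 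Applying classical matrix Golden--Thompson in $\calA$ gives $\tr(\exp(x+y)) \le \tr(\exp(x)\exp(y))$, and cyclicity of the associative trace yields
$\tr(\exp(x)\exp(y)) = \tr\lrpar{\tfrac{1}{2}(\exp(x)\exp(y)+\exp(y)\exp(x))} = \tr(\exp(x)\circ\exp(y))$,
which is the claimed inequality.

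\textbf{Step 3 (The exceptional case $\mathrm{Herm}(3,\bbO)$).} This is the only simple EJA that is not special. Here I would invoke the Shirshov--Cohn theorem: every Jordan algebra generated by two elements (and the identity) is special. Applied to the unital Jordan subalgebra $\calB \subseteq \mathrm{Herm}(3,\bbO)$ generated by $\{x,y\}$, this produces an associative algebra $\calA$ and a Jordan embedding $\calB \hookrightarrow \calA$. Since $\exp(x+y)$, $\exp(x)$, and $\exp(y)$ all lie in $\calB$ (their spectral decompositions involve only polynomial expressions in $x$ and $y$ separately, together with $x+y$), the calculation of Step 2 transfers verbatim inside $\calB$.

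The main obstacle is the trace-matching verification in Step 2, particularly for the spin factor $\bbL^{n+1}$ where the proportionality constant between the EJA trace and the associative (matrix) trace in the Clifford representation depends on the representation dimension and must be checked to be positive. Step 3 is conceptually delicate but is handled cleanly by Shirshov--Cohn once one observes that only the two-generated subalgebra matters. A self-contained, case-free proof---which would avoid invoking the classification theorem altogether and instead exploit intrinsic EJA tools such as the quadratic representation $P(\cdot)$, the Peirce decomposition, and Jordan-algebraic Lie--Trotter/interpolation identities---is carried out in the cited reference \cite[Theorem~5.1]{Tao_21}.
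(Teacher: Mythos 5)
First, a point of comparison: the paper does not prove this lemma at all --- it is imported verbatim from the cited reference \cite[Theorem~5.1]{Tao_21} --- so you are supplying a proof where the paper supplies only a citation. Your overall strategy (decompose $\bbV$ into simple EJAs; handle the four special simple types by embedding into an associative matrix algebra, applying classical Golden--Thompson, and using cyclicity of the associative trace to recover $\tr(\exp(x)\circ\exp(y))$; treat the Albert algebra separately) is a standard and viable route, and Steps~1 and~2 are essentially sound modulo the trace-normalization checks you already flag.

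The genuine gap is in Step~3. The Shirshov--Cohn theorem only tells you that the unital Jordan subalgebra $\calB$ generated by $x$ and $y$ embeds into \emph{some} associative algebra; that abstract enveloping algebra carries no involution, no positivity structure, and no trace functional, so the classical Golden--Thompson inequality cannot be ``applied in it,'' and nothing ``transfers verbatim.'' The workable repair stays inside $\calB$: since $\calB$ is itself a Euclidean Jordan algebra and is special, none of its simple direct summands $\calB_i$ can be exceptional, so each is one of the four types covered by Step~2 and the inequality holds for each intrinsic trace $\tr_{\calB_i}$. But the functional you must bound is $\tr_{\bbV}$ restricted to $\calB$, which is \emph{not} the intrinsic trace of $\calB$ (already for $\calB=\bbR e\subseteq\mathrm{Herm}(3,\bbR)$ one has $\tr_{\calB}(e)=1$ while $\tr_{\bbV}(e)=3$). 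You need the further observation that $\langle\cdot,\cdot\rangle_{\bbV}$ restricted to each simple summand $\calB_i$ is an associative symmetric bilinear form, hence by simplicity a \emph{positive} scalar multiple of the trace form of $\calB_i$; this gives $\tr_{\bbV}\vert_{\calB}=\sum_i c_i\,\tr_{\calB_i}$ with $c_i>0$, and the desired inequality follows by taking that positive combination of the summand-wise inequalities. Without this step the $\mathrm{Herm}(3,\bbO)$ case is incomplete. (A minor simplification elsewhere: for the spin factor $\bbL^{n+1}$ you can bypass the Clifford representation and its trace constant entirely, since rank two reduces the inequality to an explicit one-variable convexity statement in the angle between $x_1$ and $y_1$, with equality at the two endpoints.)
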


\noindent 
The next three lemmas are quite simple, and their proofs are deferred to Appendix~\ref{app:proof_analysis}. Notation-wise, 
for $x\in\bbV$, denote its eigenvalues by $\lambda_1(x)\ge \cdots \ge \lambda_n(x)$, and   define $\lambda_{\max}(x):= \lambda_1(x)$ and $\lambda_{\min}(x):= \lambda_n(x)$. Also, let  $\succeq $ denote the partial order $\succeq_\calK $.

\begin{lemma} \label{lem:lambda_max}
For all $x,y\in\bbV$, $\lambda_{\max}(x) = \max_{z\in\calC}\, \ipt{z}{x} $. If $x\preceq y$, then $\lambda_{\max}(x)\le \lambda_{\max}(y)$. 
\end{lemma}

\begin{lemma} \label{lem:alpha}
For any $x\in\calC$, $y\in\inter\calK$ and $\alpha\in(0,1]$, we have $ \tr({x} \circ {y^\alpha}) = \ipt{x}{y^\alpha}\le \ipt{x}{y}^\alpha. $
\end{lemma}

\begin{lemma} \label{lem:error_bd}
Under Assumption~\ref{assump:A}, for any $x\in\inter\calK$, we have 
\begin{equation}
F^* - F(x)\le \theta\lambda_{\max}\big(\ln\left({\nabla F(x)}/{\theta}\right)\big). 
\end{equation}
\end{lemma}

\noindent 
Equipped with the lemmas above, we are ready to present the convergence rate of Algorithm~\ref{algo:GMG}.

\begin{theorem} \label{thm:rate}
Under Assumptions~\ref{assump:A} and~\ref{assump:F}, 
in Algorithm~\ref{algo:GMG}, we have 
\begin{equation}
F^* - F(\barx^t)\le \frac{\theta\ln(1/\lambda_{\min}(x^0))}{\alpha(t+1)}, \quad{\rm where} \;\; \barx^t:=\tfrac{1}{t+1}\textstyle 
{\sum}_{k=0}^t \;x^k,\quad \forall\, t\ge 0. \label{eq:rate_GMG}
\end{equation}
\end{theorem}

\begin{proof}
Since $x^t\in \ri\calC$ for $t\ge 0$, by Lemmas~\ref{lem:GT} and~\ref{lem:alpha} and Proposition~\ref{prop:well_posed}, we have  
\begin{align}
\tr(\hatx^{t+1}) = \tr\big(\exp\big(\ln(x^t)+\ln(\nabla F(x^t)^\alpha)\big)\big)\le \tr\big(x^t\circ \nabla F(x^t)^\alpha\big) \le \ipt{x^t}{\nabla F(x^t)}^\alpha = \theta^\alpha.  
\end{align}
As a result, 
we have for all $t\ge 0$, 
\begin{align}
\ln(x^{t+1}) &= \ln(\hatx^{t+1}) - \ln(\tr(\hatx^{t+1}))e\\ 
& = \ln(x^{t}) + \ln(\nabla F(x^t)^\alpha)- \ln(\tr(\hatx^{t+1}))e\\
& \succeq \ln(x^{t}) + \alpha\ln(\nabla F(x^t))- \alpha \ln(\theta) e. \label{eq:before_telescope}
\end{align}
Telescoping~\eqref{eq:before_telescope} over $k=0,\ldots,t$ and using Assumption~\ref{assump:F}, we have
\begin{align}
\ln(x^{t+1}) &\succeq \ln(x^{0}) + \textstyle  \alpha\sum_{k=0}^{t} \, \ln(\nabla F(x^k))- \alpha(t+1) \ln(\theta) e\\
&\succeq \ln(x^{0}) + \alpha(t+1)\ln(\nabla F(\barx^t))- \alpha (t+1) \ln(\theta) e\\
&= \ln(x^{0}) + \alpha(t+1)\ln(\nabla F(\barx^t)/\theta)
\end{align}
Since $x^{t+1}\in\ri\calC$, we have $\lambda_i(x^{t+1})\in(0,1)$ for all $i\in[n]$, and hence $\ln(x^{t+1})\preceq 0$. Thus 
\begin{align}
\ln(\nabla F(\barx^t)/\theta) \preceq -\ln(x^0)/(\alpha (t+1)). 
\end{align} 
Now, by Lemma~\ref{lem:lambda_max}, 
we have
\begin{align}
\lambda_{\max}\big(\ln(\nabla F(\barx^t)/\theta)\big)&\le \lambda_{\max}\big(-\ln(x^0)/(\alpha (t+1))\big)\\
& = -\lambda_{\min}\big(\ln(x^0)\big)/(\alpha (t+1)) = -\ln\big(\lambda_{\min}(x^0)\big)/(\alpha (t+1))
\end{align}
This,  together with Lemma~\ref{lem:error_bd}, completes the proof. 
\end{proof}

\begin{remark} \label{rmk:optimal_choice}

Note that Theorem~\ref{thm:rate} provides us with some insight in choosing the starting point $x^0\in \ri\calC$ and the exponent parameter $\alpha\in(0,1]$. Specifically, 
if we choose $x^0$ and $\alpha$ to minimize 
the upper bound of the convergence rate in~\eqref{eq:rate_GMG}, 
then the optimal choices are $x^0 = (1/n) e$ and $\alpha = 1$. Under such choices, the convergence rate of Algorithm~\ref{algo:GMG} becomes 
\begin{equation}
F^* - F(\barx^t)\le \frac{\theta\ln(n)}{t+1}, \quad \forall\, t\ge 0. \label{eq:opt_conv_rate}
\end{equation}
\end{remark}

\begin{remark} \label{rmk:x^0_rate}
Note that the convergence rate in~\eqref{eq:rate_GMG} also applies to $\barx^0 = x^0$, which rarely occurs for FOMs. (Indeed, since the starting point $x^0$ is chosen before running the algorithm, we typically can only provide computational guarantees  for the first (averaged) iterate onward.) 
Specifically, since $x^0$ can be any point in $\ri\calC$, 
 we have 
\begin{equation}
\delta(x):=F^* - F(x)\le {\theta\ln(1/\lambda_{\min}(x))}, \quad \forall\, x\in\ri\calC. \label{eq:delta(x)}
\end{equation}
In particular, let $x_\rmc := (1/n)e$, then we have  $\delta(x_\rmc)\le \theta\ln(n)$. 
In fact, the same bound on $\delta(x_\rmc)$ 
has been derived in~\cite[Lemma~7]{Zhao_23pet} and~\cite[Lemma~3]{Khachiyan_96} 
for the two specific problems~\eqref{eq:PET} and~\eqref{eq:DOPT}, respectively, in very different ways.  
In view of this, the bound in~\eqref{eq:delta(x)} 
extends these 
results not only to the whole problem class~\eqref{eq:P}, but also to 
any point $x\in \ri\calC$. As such, \eqref{eq:delta(x)}  can be viewed as an interesting byproduct of the convergence analysis of Algorithm~\ref{algo:GMG}.
 
\end{remark}

\begin{remark}
Note that for solving~\eqref{eq:P}, we can always scale 
$f$ (or equivalently, $F$) by a factor of $\theta^{-1}$, so that $f$ becomes $1$-LH.  
In this case, the convergence rate  in~\eqref{eq:rate_GMG} only involves two quantities, namely  $\alpha$ and $\lambda_{\min}(x^0)$, both of which only depend on the input to Algorithm~\ref{algo:GMG}, and hence are known to us before we run the algorithm. In view of this, the convergence rate  in~\eqref{eq:rate_GMG} is {\em problem-data independent}, and can be used as a stopping criterion for Algorithm~\ref{algo:GMG}. 



\end{remark}




\section{Function Classes Satisfying Assumptions~\ref{assump:A} and~\ref{assump:F}} \label{sec:function}

Let us start by providing an equivalent statement of Assumption~\ref{assump:F}. To that end, let $\PI(\bbV)$ denote the set of primitive idempotents in $\bbV$, namely  $\PI(\bbV) = \{q\in\bbV:q^2 = q, \tr(q) = 1\}\subseteq \calC$.

\begin{prop} \label{prop:LGrad_convex}
Under Assumption~\ref{assump:A}, 
$F$  satisfies Assumption~\ref{assump:F} 
if and only if 
 the 
function 
\begin{equation}
\phi_u:x\mapsto \ipt{\ln(\nabla F(x))}{u} 
\label{eq:phi_u}
\end{equation}
is convex on $\inter\calK$, for any $u\in\PI(\bbV)$.
\end{prop}

\begin{proof}
See Appendix~\ref{app:proof_func}. 
\end{proof}


\noindent 
In the following, we will introduce two scenarios where $f$ and $\rvA$ satisfy Assumptions~\ref{assump:A} and~\ref{assump:F}. Let us start with the first scenario. 

\subsection{First Scenario
}

As mentioned in Section~\ref{sec:algo}, the Euclidean Jordan algebra associated with $\bbR_+^n$ 
is $\mathrm{Herm}(1,\bbR)^n$, and $\PI(\bbR^n)=  \{e_i\}_{i=1}^n$. Also, in this case, $\calC = \Delta_n$. 
To introduce $f$ and $\rvA$, we need to first introduce some background on hyperbolic polynomials and hyperbolicity cones. 

%
%
%
%
%
%
\subsubsection{Background on hyperbolic polynomials and hyperbolicity cones} \label{sec:hyperbolic}
\noindent 
Let $p:\bbY\to \bbR$  be a real homogeneous polynomial of degree  $m\ge 1$  
and the direction $d \in \bbY$  satisfies that  $p(d) > 0$ . Then  $p$  is {\em hyperbolic} 
w.r.t.\  $d$, if for every  $y \in \bbY$, the (univariate) 
polynomial  $t \mapsto p(y - td)$  
has only real roots, which are denoted by $\lambda_1^d(y)\ge \cdots\ge  \lambda_m^d(y)$. 
Notation-wise, 
this is written as $p\in \calH_m(d)$, where $\calH_m(d)$ denotes the set of hyperbolic polynomial on $\bbY$ of degree  $m$ w.r.t.\ the direction $d\in\bbY$. 
Define $\lambda^d(y):= (\lambda_1^d(y), \ldots,\lambda_m^d(y))$ for $y \in \bbY$. We call $\lambda^d:\bbY\to \bbR^m$ 
 the {\em spectral map} w.r.t.\ $p$ and $d$, and 
 $\lambda_k^d(y)$ the $k$-th eigenvalue of $y$ w.r.t.\ $p$ and $d$, where $1\le k\le m$. (Note that for notational brevity, the dependence of $p$ in $\lambda^d$ and   $\lambda^d_k$ has been suppressed, and the same applies to other notations in the following.) 

 Next, let us define the hyperbolicity cone of $p$ w.r.t.\ $d$ as 
 \begin{equation}
 \calC(d):=\{y\in\bbY: \lambda^d_m (y)>0\}. 
 \end{equation}
 From the literature~\cite{Garding_59,Bauschke_01a,Renegar_06}, it is well-known 
 that   $\calC(d)$ is the connected component of $\{y\in\bbY: p (y)\ne 0\}$ that contains $d$, and for any $a\in \calC(d)$, $p$ is hyperbolic w.r.t.\  $a$   and $\calC(a) = \calC(d)$. 
   Furthermore,  $\calC(d)$ is an open convex cone and  $\cl\calC(d) = \{y\in\bbY: \lambda^d_m (y)\ge 0\}$ is a closed, convex and solid cone. Clearly, we have that $p(y)>0$ for $y\in \calC(d)$ and $p(y)=0$ for  $y\in\bdry\calC(d)$. Finally, we call $p$ {\em complete} if $\calZ_p(d):=\{y\in\bbY:\lambda^d(y) = 0\} = \{0\}$.

Note that the class of complete hyperbolic polynomials is fairly broad. For example, from Section~\ref{sec:background}, we know that given any Euclidean Jordan algebra $(\bbY,\circ)$ 
with identity $e$ and rank $m\ge 1$,  the   
function 
$\det:\bbY\to\bbR$ (defined in~\eqref{eq:def_det}) is a complete hyperbolic polynomial of degree $m$ w.r.t.\ the direction $e$. Furthermore, the hyperbolicity cone $\calC(e) = \inter \calK(\bbY)$, where $\calK(\bbY)$ is 
the cone of squares of $\bbY$. For more examples, we refer readers to~\cite[Section~6]{Bauschke_01a}.



\subsubsection{Definitions of $f$ and $\rvA$} 
Based on the introduction in Section~\ref{sec:hyperbolic}, 
we define 
\begin{equation}
f(y):= \left\{\begin{array}{ll}
\ln p(y), \quad &\forall \, y\in \calC(d)\\
-\infty,  \quad &\forall \, y\in \bbY\setminus \calC(d)
\end{array}\right. \quad \mbox{and} 
\qquad \barf := -f,  \label{eq:def_case_i}
\end{equation}
where $p\in\calH_m(d)$ is  {\em complete} with hyperbolicity cone $\calC(d)$. 
Clearly, $\dom \barf = \calC(d)$ and hence $\Gamma:=\cl\dom \barf = \cl \calC(d)$.  In addition, the linear operator $\rvA:\bbR^n \to \bbY$ takes the following form: 
\begin{align}
\mbox{$\rvA x =\sum_{i=1}^n x_i v_i$,\qquad  where $\{v_i\}_{i=1}^n \subseteq \Gamma\setminus\{0\}$ and $\sum_{i=1}^n  v_i\in \calC(d)$ (= $\inter \Gamma$).} \label{eq:def_case_i_A}
\end{align}


\begin{remark}
Note that the objectives in both~\eqref{eq:PET} and~\eqref{eq:DOPT} fall under this function  class. Specifically, for~\eqref{eq:PET}, $\bbY = \bbR^m$, $p(y) = \prod_{j=1}^m y_j$ with $d = (1,1,\ldots,1)$ and $\calC(d) = \bbR^m_{++}$, 
and for $i\in[n]$,  $v_i$  is the $i$-th column of the (nonnegative) data matrix $A = [a_1 \cdots a_m]^\top$. For~\eqref{eq:DOPT}, $\bbY = \bbS^m$, $p(Y) = \det(Y)$ with $d = I_m$ and $\calC(d) = \bbS^m_{++}$, 
and for $i\in[n]$,  $v_i=A_i$. 
\end{remark}




\noindent 

\begin{remark} \label{rmk:Gamma_regular}
Given a nonempty convex set $ \calC\subseteq \bbY$, let $\calL(\calC)$ denote the lineality space of $\calC$, i.e., 
$\calL(\calC):= \{h\in\bbY:y+th\in \calC, \,\forall\,t\in \bbR, \,\forall\,y\in\calC \}$. 
From~\cite[Fact~2.9]{Bauschke_01a}, we know that  
\begin{equation}
\calZ_p(d) = \calL(\calC(d)) = \calL(\Gamma), \label{eq:Z_p} 
\end{equation}
Therefore,  $\Gamma$ is pointed if and only if $p$ is complete. Since $\calC(d)$ is open and convex, we know that $\Gamma $ is closed, convex and solid, and hence $\Gamma$ is regular if and only if $p$ is complete.
\end{remark}

\begin{remark}\label{rmk:LHSCB}
From~\cite[Section~4]{Guler_97} and~\cite[Chapter~2]{Nest_94}, we know that $\barf$ 
is an 
{\em $m$-logarithmically-homogeneous self-concordant barrier} ($m$-LHSCB) on $\Gamma$. 
Specifically, this means  that $\barf$ is convex, three-times differentiable on $\calC(d)$,  and satisfies the following conditions:  
\begin{enumerate}[label=\roman*),leftmargin=20pt,itemsep=5pt,parsep=0pt,topsep=5pt]
\item for any $\{y_k\}_{k\ge 0}\subseteq \calC(d)$ such that $y_k\to y\in \bdry \calC(d)$, we have $\barf(y_k)\to +\infty$,
\item $\abs{D^3 \barf(y)[h,h,h]} \le 2\, \big(D^2 \barf(y)[h,h]\big)^{3/2}$, for all $y\in \calC(d)$ and $h\in\bbY$,
\item $\barf(ty) = \barf(y) - m\ln t$, for all $y\in \calC(d)$ and $t>0$,
\end{enumerate}
where for $k\ge 1$ and $h_1,\ldots,h_k\in\bbY$,   
\begin{equation}
D^k \barf(y)[h_1,\ldots,h_k]:= \textstyle \frac{\partial^k}{\partial s_1\cdots  \partial s_k} \; \barf(y + s_1h_1 + \cdots + s_kh_k)\big|_{s_1=\cdots=s_k=0}\label{eq:direc_deriv}
\end{equation}
denotes the $k$-th order directional derivative of $\barf$ at $y\in \calC(d)$ in the directions $h_1,\ldots,h_k\in\bbY$. 
From the these definitions, it is clear that $\barf$ is essentially smooth (cf.~Definition~\ref{def:legendre}). 
\end{remark}


\subsubsection{Proving That $f$ and $\rvA$ Satisfy Assumptions~\ref{assump:A} and~\ref{assump:F} }

Let us start with the  following proposition regarding $\barf$, whose proof can be found in Appendix~\ref{app:proof_func}. 


\begin{prop} \label{prop:equiv_barf}
For $\barf$ defined in~\eqref{eq:def_case_i}, the following four statements are equivalent:\\
\begin{tabular}{ll}
(a) $p$ is complete,&(b) $\barf$ is non-degenerate (i.e., $\nabla^2 \barf(y)\succ 0$,  $\forall\,y\in \calC(d)$),\\ 
(c) $\barf$ is essentially strictly convex, & (d) $\barf$ is Legendre.
\end{tabular}
 \end{prop}


\noindent 
In particular, Proposition~\ref{prop:equiv_barf} implies that the Legendre assumption of $\barf $ amounts to assuming  $p$ to be complete. In this case, we know that 
$\nabla f: \calC(d) \to \inter \Gamma^*$, 
and hence 
\begin{equation}
\ipt{\nabla f(y)}{a}>0, \quad \forall\, y\in \calC(d), \quad \forall\, a\in \Gamma\setminus\{0\}. 
\label{eq:inner_prod_pos}
\end{equation}

\noindent 
To show that $f$ and $\rvA$ satisfy Assumptions~\ref{assump:A} and~\ref{assump:F}, we need the following two lemmas. 

\begin{lemma} \label{lem:Q_a}
Let $p$ be complete. 
For any $a\in \Gamma\setminus\{0\}$, define 
\begin{equation}
Q_a(y):= \ln(\ip{\nabla f(y)}{a}), \quad \forall \,y\in \calC(d).  \label{eq:def_Q_a}
\end{equation}
Then  $Q_a$ is convex on $\calC(d)$, 
for any $a\in \Gamma\setminus\{0\}$. 
\end{lemma}

\begin{proof}\renewcommand{\qedsymbol}{}

 For $k\in[m]$, let $E_{k}$ be the elementary symmetric polynomial of degree $k$ on $\bbR^m$, i.e., 
\begin{equation}
E_{k}(\lambda):= \textstyle \sum_{\calI\subseteq [m],\, \abst{\calI} = k}\;  \prod_{i\in\calI} \; \lambda_i, \quad \forall\, \lambda\in \bbR^m. 
\end{equation}
Fix any $a\in \calC(d)$.  From~\cite[Fact~2.10]{Bauschke_01a}, we know that $E_{m-k}(\lambda^a(y)) = D^k p(y)[a]^k/(k!\, p(a))$ for any $y\in \bbY$ and $k=0,1,\ldots,m-1$, and consequently, 
\begin{equation}
Q_a(y)= \ln\frac{\ipt{\nabla p(y)}{a}}{p(y)}= \ln\frac{E_{m-1}(\lambda^a(y))}{E_m(\lambda^a(y))}, \quad \forall\,y\in \calC(d). 
\end{equation} 
In addition, from~\cite[Corollary~4.6]{Bauschke_01a}, we know that for any $a\in \calC(d)$, $Q_a$ is convex on $\calC(d)$. 
Now, for any $a\in \Gamma\setminus\{0\}$, there exist $\{a_k\}_{k\ge 0}\subseteq \calC(d)$ such $a_k\to a$, 
and since for any $y\in\calC(d)$, $a\mapsto Q_a(y)$ is continuous on $\Gamma\setminus\{0\}$ (cf.~\eqref{eq:inner_prod_pos}), 
we know that $Q_{a_k}(y) \to Q_a(y)$. Therefore, for any $y,y'\in\calC(d)$ and $\alpha\in[0,1]$, we have 
\begin{align*}
Q_a(\alpha y +(1-\alpha)y') &= \lim_{k\to +\infty} Q_{a_k}(\alpha y +(1-\alpha)y')\\
 &\le \lim_{k\to +\infty} \alpha Q_{a_k}( y) +(1-\alpha)Q_{a_k}( y') = \alpha Q_a( y) +(1-\alpha)Q_a( y').
 \tag*{$\square$}
\end{align*}
\end{proof}
\vspace{-4ex}
\noindent 

\begin{lemma} \label{lem:sum_v_i}
Let $\Omega\subseteq \bbY$ be a closed, convex and solid cone, 
and $\{v_i\}_{i=1}^n\subseteq \Omega$. If $\sum_{i=1}^n v_i \in\inter\Omega$, then $\sum_{i=1}^n \alpha_i v_i\in\inter \Omega$ for any $\alpha_i>0$, $i\in[n]$.  
\end{lemma}
\begin{proof}
See Appendix~\ref{app:proof_func}. 
\end{proof}

\noindent 
Equipped with the two lemmas above, we can show the following. 

\begin{prop}
Let $p$ be complete, and $f$ and $\rvA$ be given by~\eqref{eq:def_case_i} and~\eqref{eq:def_case_i_A}, respectively. Then $f$ and $\rvA$ satisfy
Assumption~\ref{assump:A} and $F = f\circ \rvA$ satisfies Assumption~\ref{assump:F}. 
\end{prop}

\begin{proof}
From Remark~\ref{rmk:Gamma_regular}, since $p$ is complete, $\Gamma$ is regular. 
Since $\{v_i\}_{i=1}^n \subseteq \Gamma$ and $\sum_{i=1}^n  v_i\in \inter \Gamma$, 
by Lemma~\ref{lem:sum_v_i}, we know that $\rvA(\bbR_{++}^n)\subseteq \inter\Gamma$. 
In addition, since the adjoint $\rvA^*: \bbY^*\to \bbR^n$ has the form $\rvA^* z = (\ipt{z}{v_i})_{i=1}^n$ and $\{v_i\}_{i=1}^n \subseteq \Gamma\setminus\{0\}$, we know that $\rvA^*(\inter\Gamma^*) \subseteq \bbR_{++}^n$. This shows that $f$ and $\rvA$ satisfy Assumptions~\ref{assump:A}. Now, consider $\phi_u$ defined in~\eqref{eq:phi_u}, and for any $i\in[n]$, 
\begin{align*}
\phi_i(x):=\phi_{e_i}(x)&= \ipt{\ln(\nabla F(x))}{e_i} = \ln(\nabla_i F(x)) = \ln(\ipt{\nabla F(x)}{e_i}) \\
&\qquad = \ln(\ipt{\nabla f(\rvA x)}{\rvA e_i}) = \ln(\ipt{\nabla f(\rvA x)}{v_i}) = Q_{v_i}(\rvA x), \quad 
\end{align*}
where $Q_{v_i}$ is defined in~\eqref{eq:def_Q_a}. 
Since $\rvA(\bbR_{++}^n)\subseteq \inter\Gamma = \calC(d)$ and $\{v_i\}_{i=1}^n \subseteq \Gamma\setminus\{0\}$, invoking Lemma~\ref{lem:Q_a}, we know that $\phi_i$ is convex on $\bbR_{++}^n$, for any $i\in[n]$. Since $\PI(\bbR^n)=  \{e_i\}_{i=1}^n$, by Proposition~\ref{prop:LGrad_convex}, we know that $F$ satisfies Assumption~\ref{assump:F}. 
%
%
%
%
%
%
%
\end{proof}

\noindent 
To introduce the second scenario, we need to first introduce the notion of  {\em representative simple Euclidean Jordan Algebra} and a Cauchy–Schwarz inequality in this algebra. 

\subsection{A Cauchy–Schwarz Inequality in  Representative Simple Euclidean Jordan Algebras} \label{sec:CS_EAJ} 

Our description of the representative simple Euclidean Jordan algebras follows from~\cite[Section~2]{Schm_01}.  
Let $\bbA$ be a finite-dimensional real vector space. 
An algebra $(\bbA,\cdot)$ is an associative $\bbR$-algebra 
if the bilinear product $\cdot:\bbA\times\bbA\to\bbA$ associative, 
i.e., $(x\cdot y)\cdot z = x\cdot (y\cdot z)$ for all $x,y,z\in\bbA$.  
For convenience, we will omit $\cdot$ and simply write $x\cdot y$ as $xy$ for $x,y\in\bbA$. 
Denote the identity element in $\bbA$ by $e$, and we assume that 
there exists a {\em linear automorphism} $(\cdot)'$ on $\bbA$, called {\em conjugation}, 
such that  $(x')' = x$ and $(xy)'= y'x'$ for any $x,y\in\bbA$. We call $x'\in\bbA$ the {\em adjoint} of $x$ and $x\in\bbA$ {\em self-adjoint} if $x'=x$. We denote the subspace of self-adjoint elements in $\bbA$ by $\bbS$, namely $\bbS:=\{x\in\bbA:x'=x\}$. Note that $e = e'$ (since $e'x = xe' = x$ for all $x\in\bbA$), and hence $e\in \bbS$. 
In addition, we call $x\in\bbA$ invertible if there exists $y\in\bbA$ such that $xy=yx=e$. Note that since $(\bbA,\cdot)$ is associative, if such $y$ exists, it must be unique, and we call $y$ the inverse of $x$ and denote it by $x^{-1}$. 

Based on the associative product $\cdot$, we can define the Jordan product 
\begin{equation}
x\circ y:= (xy+yx)/2, \quad \forall\, x,y\in\bbA, 
\end{equation}
 and call  $(\bbA,\circ)$ the Jordan algebra derived from $(\bbA,\cdot)$. Note that $(\bbA,\circ)$ has the same identity as $(\bbA,\cdot)$, namely $e$, and $P(y) x= yxy$ for all $x,y\in\bbA$, where $P(\cdot)$ is defined in~\eqref{eq:def_P}.  
 Also,  since 
$\bbS$ is closed under $\circ$ 
 and contains $e$, 
 $(\bbS,\circ)$ is a Jordan sub-algebra of $(\bbA,\circ)$ with identity $e$. Based on the definitions above, let us define the {\em simple Euclidean Jordan associative} (EAJ) system. 

\begin{definition}[Simple EAJ system] \label{def:EAJ_system}
A triple $(\bbA,\bbS,\bbV)$ is called a {simple} EAJ system if 
\begin{enumerate}[label=\roman*)]
\item $(\bbA,\cdot)$ is an associative $\bbR$-algebra with identity element $e$ and conjugation $(\cdot)'$ 
such that $\tr(xx')\ge 0$ for all $x\in\bbA$ (where $\tr(\cdot)$ is defined in~\eqref{eq:def_det}), 
\item the Jordan algebra $(\bbS,\circ)$ is Euclidean, and hence a Euclidean Jordan algebra, 
\item $\bbV$ is a linear subspace of $\bbS$ that contains $e$ and is closed under $\circ$, thus $(\bbV,\circ)$ is a Jordan algebra. In addition, $(\bbV,\circ)$ is Euclidean and simple, making it a {\em simple} Euclidean Jordan sub-algebra of $(\bbS,\circ)$,
\item $\bbA$ is generated by $\bbV$, namely, the elements of $\bbA$ consist of all the linear combinations of the {\em associative} products of elements in $\bbV$.  
\end{enumerate}
\end{definition}

Following
~\cite[Section~4]{Schm_01}, we call 
$(\bbV,\circ)$ a {\em representable} simple Euclidean Jordan algebra if there exists a {simple} EAJ system $(\bbA,\bbS,\bbV)$ as described above. Note that among all the five simple EJAs listed in Section~\ref{sec:background}, only the first four are representable, namely $\bbL^{n+1}$ and $\mathrm{Herm}(n,\bbF)$, where $\bbF = \bbR$, $\bbC$ or $\bbH$. 
In addition, we call the cone of squares in any representable simple  Euclidean Jordan algebra as a  {\em representable irreducible symmetric cone}. Notation-wise, denote the cone of squares in $(\bbS,\circ)$ and $(\bbV,\circ)$ by $\calK(\bbS)$ and $\calK(\bbV)$, respectively.

The main result in this section is the following Cauchy–Schwarz inequality over a {simple} EAJ system, and its corollary that concerns a representable simple Euclidean Jordan algebra. The corollary will be useful in Section~\ref{sec:second_case}, where we introduce the second scenario. In addition, both results may be of independent interest, and their proofs are shown in Section~\ref{sec:proof_CS}.  

\begin{theorem}
\label{thm:CS0}
Let $(\bbA,\bbS,\bbV)$ be a {simple} EAJ system,  
and $a_i,b_i\in \bbA$ for $i\in[m]$ such that $\sum_{i=1}^m a_ia_i' \in \inter\calK(\bbS)$. Then we have 
\begin{equation}
\textstyle \sum_{i=1}^m b_ib_i'\; \succeq_{\calK(\bbS)}\; (\sum_{i=1}^m b_i a_i') (\sum_{i=1}^m a_ia_i')^{-1}(\sum_{i=1}^m  a_ib_i'). 
\label{eq:CS0}
\end{equation}
\end{theorem}

\begin{corollary}
\label{cor:CS}
Let $(\bbV,\circ)$ be a representable simple Euclidean Jordan algebra. 
For $i\in[m]$, let $v_i\in\calK(\bbV)$ 
and  $\alpha_i,\beta_i\in\bbR$  such that $\sum_{i=1}^m \alpha_i^2 v_i\in\inter\calK(\bbV)$.  
Then we have 
\begin{equation}
\hspace{-2ex} \textstyle \sum_{i=1}^m \beta_i^2 v_i\; \succeq_{\calK(\bbV)}
P\big(\sum_{i=1}^m {\alpha_i\beta_i} v_i\big) (\sum_{i=1}^m \alpha_i^2 v_i)^{-1}. \label{eq:CS}
\end{equation}
\end{corollary}

\begin{remark}
In Corollary~\ref{cor:CS},  if $(\bbV,\circ)=\mathrm{Herm}(n,\bbR)$ or $ \mathrm{Herm}(n,\bbC)$, 
then~\eqref{eq:CS} reduces to the so-called ``matrix Cauchy-Schwarz inequality'', which can be proved using simple matrix algebra (see e.g.,~\cite{Lav_08,Zhao_25}).  However, note that for the Jordan spin algebra $\bbL^{n+1}$, it is rather challenging to prove~\eqref{eq:CS} directly, and leveraging  the simple EJA system seems a much viable approach. In addition, this approach unifies the proof of~\eqref{eq:CS} for all the representable simple Euclidean Jordan algebras. 
\end{remark}


\subsubsection{Proofs of Theorem~\ref{thm:CS0} and Corollary~\ref{cor:CS} } \label{sec:proof_CS}

Our proofs make use of a few technical lemmas. Before showing 
them, 
let us make some remarks regarding the simple EAJ system $(\bbA,\bbS,\bbV)$. 

First, for any $x\in\bbA$,  since $x\circ x = x  x$, 
both products $\circ$ and $\cdot$ define the same power $x^q$ for any 
$q\in\bbZ_+$  (recall that $x^0:=e$), 
and hence the expression
 $x^q$ has no ambiguity. 
This fact has the following consequences. First, both $\circ$ and $\cdot$  define the same characteristic polynomial of $x$, and hence the eigenvalues of $x$, as well as  $\tr(x)$ and $\det(x)$, are defined in the same way w.r.t.\ these two products. Second, 
both $\circ$ and $\cdot$ define the same set of real polynomials in $x$, 
namely $\bbR[x]$. 
Since $p_1(x) p_2(x) = p_1(x)\circ p_2(x)$ for any $p_1(x), p_2(x)\in\bbR[x]$,  
we know that $(\bbR[x],\cdot)$ and $(\bbR[x],\circ)$ are the same $\bbR$-algebra 
(which is commutative and associative). 

Second, suppose that $x$ is invertible in $(\bbV,\circ)$ with inverse $x^{-1}$. By definition, 
 $x^{-1}\in \bbR[x]$ 
 and $x^{-1}\circ x = e$. Since $\bbR[x]\subseteq \bbV\subseteq \bbS\subseteq \bbA$, we know that $x^{-1}$ is also the inverse of $x$ in $(\bbS,\circ)$ and $(\bbA,\circ)$. 
 Moreover, since $(\bbR[x],\cdot)$ is commutative, we have 
$e = x\circ x^{-1} = ({xx^{-1} + x^{-1}x})/{2} = xx^{-1} = x^{-1}x.$ 
Therefore, $x^{-1}$ is also the inverse of $x$ in $(\bbA,\cdot)$. This means that for $x\in \bbV$, the expression $x^{-1}$ has no ambiguity. The same argument also reveals that  for invertible  $x$ in  $(\bbS,\circ)$, its inverse $x^{-1}$ in  $(\bbS,\circ)$ also coincides with that in $(\bbA,\circ)$ and $(\bbA,\cdot)$, and has no ambiguity.  
\\[-1.5ex]

\noindent 
Our technical lemmas start with the following  results from~\cite[Section~2]{Schm_01}.

\begin{lemma}[{\cite[Section~2]{Schm_01}}] \label{lem:Alizadeh}
Let $(\bbA,\bbS,\bbV)$ be a {simple} EAJ system. Then 
 \begin{enumerate}[label=(\roman*)]
\item \label{item:psd_in_S} for any $x\in\bbA$, $xx'\in\calK(\bbS)$, 
\item \label{item:spec_decomp} for any $x\in\bbV$, let its spectral decomposition in $(\bbV,\circ)$ be  $x=\sum_{i=1}^n \mu_i q_i$, then its spectral decomposition in $(\bbS,\circ)$ is $x=\sum_{i=1}^n \mu_i (\sum_{j=1}^k d_{ij})$ for some positive integer $k$. 
\end{enumerate}
\end{lemma} 

\noindent 
In particular, Lemma~\ref{lem:Alizadeh}\ref{item:spec_decomp} implies that for any $x\in\bbV$, if $\mu$ is an eigenvalue of $x$ in  $(\bbV,\circ)$ with multiplicity $m$, then it is an eigenvalue of $x$ in  $(\bbS,\circ)$ with multiplicity $km$. Based on Lemma~\ref{lem:Alizadeh}, 
we can prove 
the following lemma. 

\begin{lemma}\label{lem:cone_intersect}
Consider the setting in Lemma~\ref{lem:Alizadeh}. 
Then 
$\calK(\bbV) = \bbV\cap \calK(\bbS)$ and $\ri\calK(\bbV) = \bbV\cap {\sf int}\,\calK(\bbS)$, where ${\sf int}$ is taken w.r.t.\ $\bbS$. 
\end{lemma}
\begin{proof}
Given any $x\in\bbV$, note that $x\in \calK(\bbV)$ (resp.\ $\calK(\bbS)$) if and only if the eigenvalues of $x$ in $\bbV$ (resp.\ $\bbS$) are nonnegative. Similarly, $x\in \ri\calK(\bbV)$ (resp.\ $\inter\calK(\bbS)$) if and only if the eigenvalues of $x$ in $\bbV$ (resp.\ $\bbS$) are positive. 
Using Lemma~\ref{lem:Alizadeh}\ref{item:spec_decomp}, we complete the proof.
\end{proof}

\noindent 

\begin{lemma} \label{lem:sum_v_i=0}
Let $\calK\ne \emptyset$ be a closed, convex and pointed cone, and $\{v_i\}_{i=1}^n\subseteq \calK$. If $\sum_{i=1}^n v_i = 0$, then $v_i=0$ for all $i\in[n]$.
\end{lemma}

\begin{proof}
Fix any $j\in[n]$. Since $\sum_{i=1}^n v_i = 0$, we have $ v_j = -\sum_{i\ne j}^n v_i$. Since $\{v_i\}_{i=1}^n\subseteq \calK$ and $\calK$ is convex, we have $v_j\in \calK\cap (-\calK)$. Since $\calK$ is pointed, we have $v_j=0$. 
\end{proof}



\noindent 
{\bf Proof of Theorem~\ref{thm:CS0}.}
From Lemma~\ref{lem:Alizadeh}\ref{item:psd_in_S},  we have
\begin{align}
 \textstyle \sum_{i=1}^m (b_i-c a_i)(b_i-c a_i)'\;\succeq_{\calK(\bbS)}\;0. \label{eq:sum_rank_one}
\end{align}
On the other hand, note that
\begin{align*}
 \textstyle\sum_{i=1}^m (b_i-ca_i) (b_i-ca_i)'=\; &\textstyle\sum_{i=1}^m (b_i-ca_i) (b_i'- a_i'c')\\
= \; & \textstyle\sum_{i=1}^m b_i b_i' - (\sum_{i=1}^m b_i a_i') c' - c  (\sum_{i=1}^m a_ib_i') + c (\sum_{i=1}^m a_ia_i')c'.
\end{align*}
Since $\sum_{i=1}^m a_ia_i' \in \inter\calK(\bbS)$, it is invertible in $(\bbS,\circ)$. 
Therefore, let $c=\textstyle (\sum_{i=1}^m b_ia_i')(\sum_{i=1}^m a_ia_i')^{-1},$  
 and 
 we have 
\begin{align}
 \textstyle\sum_{i=1}^m (b_i-ca_i) (b_i-ca_i)'
= \;  \textstyle\sum_{i=1}^m b_i b_i'  - (\sum_{i=1}^m b_ia_i')(\sum_{i=1}^m a_ia_i')^{-1} (\sum_{i=1}^m a_ib_i'). \label{eq:sum_rank_one2}
\end{align}
Combining~\eqref{eq:sum_rank_one} and~\eqref{eq:sum_rank_one2}, we complete the proof. \qed

\vspace{1ex}

\noindent 
{\bf Proof of Corollary~\ref{cor:CS}.}
Let $(\bbA,\bbS,\bbV)$ be the {simple} EAJ system associated with $(\bbV,\circ)$. For $i\in[m]$, since  $v_i\in\calK(\bbV)$, 
define $a_i: = {\alpha_i}v_i^{1/2}$ and $b_i: = {\beta_i}v_i^{1/2}$.   Thus $\sum_{i=1}^m a_ia_i' = \sum_{i=1}^m \alpha_i^2 v_i\in{\sf int}\,\calK(\bbV)$, where ${\sf int}$ is taken w.r.t.\ $\bbV$.  
By Lemma~\ref{lem:cone_intersect}, we have  $\sum_{i=1}^m a_ia_i'\in\inter\calK(\bbS)$, where ${\sf int}$ is taken w.r.t.\ $\bbS$. 
Invoking Theorem~\ref{thm:CS0}, 
we have 
\begin{align}
\begin{split}
  &\textstyle\sum_{i=1}^m \beta_i^2 v_i\; \succeq_{\calK(\bbS)}(\textstyle\sum_{i=1}^m b_i a_i') (\sum_{i=1}^m a_ia_i')^{-1}(\sum_{i=1}^m  a_ib_i')\\
&\qquad = \textstyle(\sum_{i=1}^m \alpha_i \beta_i v_i) (\sum_{i=1}^m \alpha_i^2 v_i)^{-1}(\sum_{i=1}^m  \alpha_i \beta_i v_i)
=P\big(\sum_{i=1}^m {\alpha_i\beta_i} v_i\big) (\sum_{i=1}^m \alpha_i^2 v_i)^{-1}.
\end{split}
\label{eq:CS.5}
\end{align}
%
Note that both sides of~\eqref{eq:CS.5} lie in $\bbV$, hence  by Lemma~\ref{lem:cone_intersect},~\eqref{eq:CS.5} is equivalent to~\eqref{eq:CS}. \qed\\[-1ex] 

\noindent 
Finally, we end this section with the following lemma, which will be  useful in Section~\ref{sec:proof_barf}. 

\begin{lemma} \label{lem:P(y)}
Consider the setting in Lemma~\ref{lem:Alizadeh}, and let $\succeq$ be induced by ${\calK(\bbV)}$. 
 If $x\succeq z$ for some $x,z\in\bbV$, then  $P(y)x \succeq P(y)z$ for any $y\in\bbV$. 
\end{lemma}
\begin{proof}
It suffices to show that if $x\succeq 0$ for some $x\in\bbV$, 
then $P(y)x \succeq 0$ for any $y\in\bbV$. 
Recall that $P(y)x = yxy$ for all $x,y$ in $(\bbA,\circ)$. 
If $x\in \calK(\bbV)$, then $x=u^2$ for some $u\in\bbV$. 
Since both $y,u\in \bbV\subseteq\bbS$, by Lemma~\ref{lem:Alizadeh}\ref{item:psd_in_S}, we have $P(y)x = yuuy= yu(yu)'\in\calK(\bbS)$. Since $P(y)x\in \bbV$, from Lemma~\ref{lem:cone_intersect}, we know that $P(y)x\in \calK(\bbV)$. 
\end{proof}


\subsection{Second Scenario} \label{sec:second_case}

Let $(\bbV,\circ)$ be a representable simple Euclidean Jordan algebra (cf.\ Section~\ref{sec:CS_EAJ}) and $\calK:= \calK(\bbV)$, i.e.,  the cone of squares in $\bbV$. 

\subsubsection{Definitions of $f$ and $\rvA$}  \label{sec:def_f_A_case_ii}
let $\bbY:= \bbR^m$ be equipped with the standard inner product $\ipt{\cdot}{\cdot}$, i.e., $\ipt{y}{u} = \sum_{i=1}^m y_i u_i$ for $y,u\in \bbR^m$. Given a three-times differentiable function $\psi: \bbR^m_{++} \to \bbR$, 
 we say that $\psi$ satisfies the ``coordinate-wise self-dominance'' condition on $\bbR^m_{++}$, if the following holds: 
 \begin{equation}
D \psi(y)[e_j]\; D^3 \psi(y)[e_j,h,h]\ge 2(D^2 \psi(y)[e_j,h])^2, \quad  \forall\,y\in\bbR^m_{++}, \;\;  \forall\, h\in  \bbR^m, \;\;\forall\, j\in[m].   \label{eq:cond_psi}
\end{equation}
The class of functions $f$ that we 
focus on have the following properties:  
\begin{enumerate}[label=(P\arabic*),leftmargin = 26pt]
\item \label{item:legendre_LH_-f} $ \barf:\bbY\to \barbbR$  is Legendre and $\theta$-LH with $\inter\dom \barf = \bbR_{++}^m$ (where $\barf:=-f$), 
\item \label{item:f_3times_diff} $f$  is three-times differentiable on $\bbR^m_{++}$,
\item  \label{item:condition_psi}  either $f$ satisfies~\eqref{eq:cond_psi}, 
or the function   $\exp\circ (\rho f):\bbR^m_{++}\to \bbR_{++}$ is concave on $\bbR^m_{++}$  and satisfies
 ~\eqref{eq:cond_psi}, for some $\rho>0$.   
\end{enumerate}
As a result of~\ref{item:legendre_LH_-f}, $\Gamma: =\cl\dom f = \bbR_+^m$, $\Gamma^* = \bbR_+^m$, and $\nabla f: \bbR_{++}^m\to \bbR_{++}^m$ (cf.~Proposition~\ref{prop:well_posed}). 
We let the linear operator 
$\rvA:\bbV\to \bbR^m$ take the following form:
\begin{align}
\mbox{$\rvA x := (\ipt{x}{v_i})_{i=1}^m$ for $x\in \bbV$,\qquad  where $\{v_i\}_{i=1}^m\subseteq \calK\setminus\{0\}$ and $\sum_{i=1}^m v_i \in \inter\calK$.} \label{eq:def_case_ii_A}
\end{align}
Consequently, the adjoint $\rvA^*: \bbR^m\to \bbV$ has the form $\rvA^* y =\sum_{i=1}^m y_i v_i$.\\[-1ex]

\noindent 
{\bf Examples of $f$.}  
We  provide two examples of $f$ that satisfy~\ref{item:legendre_LH_-f} to~\ref{item:condition_psi}. 
The first example is 
\begin{equation}
f_1(y):= \textstyle\sum_{j=1}^m p_j \ln y_j, \quad \forall\,y\in\bbR_{++}^m,\quad \mbox{where } p_j >0, \;\forall\,j\in[m]. \label{eq:f_sum_logs}
\end{equation}
The separable structure of $f_1$ makes it easy to verify that it 
satisfies~\eqref{eq:cond_psi}.  
Note that this example precisely appears in~\eqref{eq:QST}, where $(\bbV,\circ) = \mathrm{Herm}(n,\bbC)$, $\calK = \bbH_+^n$, and the linear operator $\rvA$ has the form in~\eqref{eq:def_case_ii_A} with $v_i = A_i$ for $i\in[m]$. 
 Now, for 
 $p\in(0,1)$, let $\normt{y}_p := (\textstyle\sum_{j=1}^m y_j^p)^{1/p}$ be the $p$-quasi-norm  of  $y\in \bbR_+^m$.  
The second example of $f$ 
is 
\begin{equation}
f_2(y): = \ln\normt{y}_p := (1/p)\ln(\textstyle\sum_{j=1}^m y_j^p), 
\quad \forall\,y\in\bbR_{+}^m\setminus\{0\}, \quad\where\; p\in(0,1].  \label{eq:log_p_norm}
\end{equation}  
Indeed, since $\exp\circ (p f_2)$  has a separable structure, one can  easily check that it is concave and 
satisfies~\eqref{eq:cond_psi}.  
Note that this example appears in~\eqref{eq:DBQP} with $m=n$ and $p=1/2$, wherein $(\bbV,\circ) = \mathrm{Herm}(n,\bbR)$, $\calK = \bbS_+^n$, and 
 $\rvA$ has the form in~\eqref{eq:def_case_ii_A} with $v_i = q_iq_i^\top $ for $i\in[n]$. 

\subsubsection{Proving $f$ and $\rvA$ Satisfy Assumptions~\ref{assump:A} and~\ref{assump:F} }

We start by showing that $f$ and $\rvA$ satisfy Assumption~\ref{assump:A}.

\begin{prop} \label{prop:assumption_A}
Let $f$ and $\rvA$ be given in Section~\ref{sec:def_f_A_case_ii}. Then they satisfy Assumption~\ref{assump:A}. 
\end{prop}

\begin{proof}
Since $\{v_i\}_{i=1}^m \subseteq \calK\setminus\{0\}$ and $\calK$ is self-dual, it is clear that $\rvA(\inter\calK) \subseteq \bbR_{++}^m$. 
  Since $\calK$ is regular, $\{v_i\}_{i=1}^n \subseteq \calK$ and $\sum_{i=1}^n  v_i\in \inter \calK$, by Lemma~\ref{lem:sum_v_i}, 
  we know that $\rvA^* y =\sum_{i=1}^m y_i v_i\in \inter\calK$ for any $y\in \bbR_{++}^m$. Hence $\rvA^*(\inter\Gamma^*) = \rvA^*(\bbR_{++}^m)\subseteq \inter\calK$. 
\end{proof}



\noindent 
To show that $f$ and $\rvA$ satisfy Assumption~\ref{assump:F}, we need the following important result. The proof of this result is relatively long and will be provided in Section~\ref{sec:proof_barf}. 

\begin{theorem} \label{thm:barf}
%
%
Let $\psi:\bbR_{++}^m\to\bbR$ be 
three-times differentiable on $\bbR^m_{++}$ 
and satisfies~\eqref{eq:cond_psi}.  Also, assume that $\nabla \psi: \bbR^m_{++}\to \bbR^m_{++}$. 
Define 
 $g:\bbR^m_{++}\to \bbV$ as 
\begin{equation}
g(y):= \ln\big(\rvA^*\nabla \psi(y)\big) = \textstyle \ln\big(\sum_{j=1}^m \nabla_j \psi(y)v_j\big), \quad \forall\,y\in\bbR^m_{++}.  \label{eq:def_g}
\end{equation}
Then $g$ is twice differentiable on $\bbR^m_{++}$, and for any $y\in\bbR^m_{++}$ and $h\in\bbR^m$, $D^2g(y)[h,h]\in\calK$, where $D^2g(y)[h,h]$ is defined in a similar way as in~\eqref{eq:direc_deriv}. 
\end{theorem}

\noindent 
Based on Theorem~\ref{thm:barf}, we can easily show that $f$ and $\rvA$ satisfy Assumption~\ref{assump:F}. 

\begin{corollary}
Let $f$ and $\rvA$ be given in Section~\ref{sec:def_f_A_case_ii}. 
Then $F:= f \circ \rvA$ 
satisfies Assumption~\ref{assump:F}. 
\end{corollary}

\begin{proof}
Fix any $x\in\inter\calK$ and $d\in\bbV$. 
Since $\nabla F: \inter\calK\to \inter\calK$ (cf.\ Proposition~\ref{prop:well_posed}), we define $\eta_x(\alpha):= \ln(\nabla F(x+\alpha d)) $ with $\dom \eta_x:= \{\alpha\in\bbR: x+\alpha d\in\inter\calK\}$, and also 
$\barg(y):= \ln(\rvA^*\nabla f(y))$ for $y\in\bbR^m_{++}$. 
Note that  $\eta_x(\alpha)= \ln(\rvA^*\nabla f(\rvA x + \alpha \rvA d) ) = \barg(\rvA x + \alpha \rvA d)$, and hence 
\begin{align}
D^2\ln(\nabla F(x))[d,d] = \eta_x''(0) = D^2\barg(\rvA x)[\rvA d,\rvA d]. \label{eq:D^2ln_barg}
\end{align}
Note that by definition, $f$ satisfies~\ref{item:legendre_LH_-f} to~\ref{item:condition_psi}. 
Now, define $\psi := f$ if $f$ satisfies~\eqref{eq:cond_psi}, and $\psi := \exp\circ (\rho f)$ if $\exp\circ (\rho f)$ is concave on $\bbR_{++}^m$ and satisfies~\eqref{eq:cond_psi}, for some $\rho>0$. Note that in either case, $\psi$ is concave  on $\bbR_{++}^m$, and satisfies all the assumptions in Theorem~\ref{thm:barf}. Let $g$ be defined in~\eqref{eq:def_g}, and we shall show that
\begin{equation}
D^2 \barg(y)[h,h]\succeq D^2 g(y)[h,h] , \quad \forall\,  y\in   \bbR_{++}^m, \; \; \forall\, h\in \bbR^m,  \label{eq:barg_g}
\end{equation}
 where $\succeq$ is induced by $\calK$. Since~\eqref{eq:barg_g} is trivial when $\psi=f$, 
 we only focus on 
 $\psi = \exp\circ (\rho f)$. In this case, $f(y) = \rho^{-1} \ln\psi(y)$ for $y\in   \bbR_{++}^m$, and hence  
 \begin{equation}
\barg(y) =  \ln\big(\rvA^*\nabla \psi(y)/(\rho\psi(y))\big) = \ln\big(\rvA^*\nabla \psi(y)\big) - \ln\big(\rho\psi(y)\big)\;  e = g(y) - \kappa(y)\; e,  
\end{equation}
where $\kappa:= \ln\circ(\rho\psi): \bbR_{++}^m\to \bbR$ is concave on $\bbR^m_{++}$, since $\psi$ is concave on $\bbR^m_{++}$. As a result, 
\begin{equation}
D^2 \barg(y)[h,h] =  D^2 g(y)[h,h] -  D^2 \kappa(y)[h,h] e\succeq D^2 g(y)[h,h], \quad \forall\,  y\in   \bbR_{++}^m, \; \; \forall\, h\in \bbR^m.  
\end{equation}
Now, consider $\phi_u$ defined in~\eqref{eq:phi_u}. By~\eqref{eq:D^2ln_barg}, for any $x\in\inter\calK$, $d\in\bbV$ and $u\in\PI(\bbV)\subseteq \calK$, 
\begin{align}
D^2 \phi_u(x)[d,d] &= \ipt{D^2\ln(\nabla F(x))[d,d]}{u} = \ipt{D^2\barg(\rvA x)[\rvA d,\rvA d]}{u}\nn \ge \ipt{D^2 g(\rvA x)[\rvA d,\rvA d]}{u}\ge 0, 
\end{align} 
where the first and second inequalities follow from~\eqref{eq:barg_g} and Theorem~\ref{thm:barf}, respectively. 
\end{proof}

\subsubsection{Proof of Theorem~\ref{thm:barf} } \label{sec:proof_barf}

To prove Theorem~\ref{thm:barf}, we need the following important properties of the quadratic representation 
$P(x)$ of $x\in\bbV$. 

\begin{lemma}[{\cite[Proposition~2.5.7]{Vieira_07}}] \label{lem:automorphism}
Let $(\bbV,\circ)$ be a Euclidean  Jordan algebra  with cone of squares $\calK$. 
  Then we have 
\begin{enumerate}[label=(\roman*)]
\item  \label{item:auto_inv} for any invertible $x\in \bbV$, $P(x)$ 
is an automorphism on both $\calK$ and 
$\inter\calK$,
\item \label{item:auto_pd} for any $x\in \inter \calK$, $P(x):\bbV\to\bbV$ is self-adjoint and positive definite.
\end{enumerate} 
\end{lemma}

\begin{lemma}[{\cite[Sections~2.3 and 2.5]{Vieira_07}}] \label{lem:calculus_P}
Consider the setting in Lemma~\ref{lem:automorphism}.  
Then we have 
\begin{enumerate}[label=(\roman*)]
\item $P(x)$ 
is invertible  if and only if $x\in\bbV$ is invertible, 
in which case 
$P(x)^{-1} = P(x^{-1})$, 
\item if both $x,y\in\bbV$ are invertible, then $P(x)y$ is invertible and $(P(x)y)^{-1} = P(x^{-1}) y^{-1}$,  
\item \label{item:fundamental} for all $x,y\in\bbV$, we have $P(P(x)y) = P(x)P(y)P(x)$.
\item for all $x\in\inter \calK(\bbV)$, $P(x^\alpha)x^\beta = x^{2\alpha+\beta}$ for any $\alpha,\beta\in \bbR$,  
\item for all $x\in\inter \calK(\bbV)$, $P(x^\alpha)=P(x)^\alpha$ for any  $\alpha\in \bbR$. 
\end{enumerate} 
\end{lemma}

\noindent 


\noindent
In proving Theorem~\ref{thm:barf}, we also need the following two lemmas. 

\begin{lemma}\label{lem:diff_ln}
Let $x\in\inter\calK$. 
Then for any $d\in\bbV$, 
\begin{align}
D\ln(x)[d] &=\medint\int_{0}^{+\infty} \, P(x+\tau e)^{-1} d \;\;\rmd \tau \label{eq:p'}\\
D^2 \ln(x)[d,d] &= -2  \medint\int_{0}^{+\infty} \,  P(x+\tau e)^{-1}P(d)(x+\tau e)^{-1} \;\;\rmd \tau. \label{eq:p''}
\end{align}
\end{lemma}

\begin{proof}
First note that the univariate function $\ln:
\bbR_{++}\to\bbR$ can be written as 
\begin{equation*}
\ln(\lambda) 
= \medint\int_{0}^{+\infty} \, ({\tau+1})^{-1} - ({\tau+\lambda})^{-1} \, \rmd \tau. 
\end{equation*}
For $x \in\inter\calK$, write its spectral decomposition as $x=\sum_{i=1}^n \sigma_i q_i$ (where $\sigma_i>0$ for $i\in[n]$), and 
\begin{align}
&\ln(x) =\textstyle \sum_{i=1}^n \ln (\sigma_i) q_i = \textstyle \sum_{i=1}^n \Big\{\medint\int_{0}^{+\infty} \, {(\tau+1)^{-1}} - (\tau+\sigma_i)^{-1} \; \rmd \tau \Big\}\, q_i\nn\\
 &\quad  =  \medint\int_{0}^{+\infty} \, ({\tau+1})^{-1} e - \textstyle \sum_{i=1}^n(\tau+\sigma_i)^{-1} q_i\, \rmd \tau 
  = \medint\int_{0}^{+\infty} \, ({\tau+1})^{-1} e  -(x+\tau e)^{-1} \; \rmd \tau. \label{eq:integral_ln(x)}
\end{align}
Now, define $\xi_x(\alpha):= (x+\alpha d)^{-1}$ with $\dom \xi_x:=\{\alpha\in\bbR: x+\alpha d\in\inter\calK\}$. Let $v:= P(x^{-1/2}) d$ 
with spectral decomposition $v=\sum_{i=1}^n \mu_i p_i$. For any $\alpha\in \dom \xi_x$, we have 
\begin{align*}
\xi_x(\alpha) = (P(x^{1/2})(e+\alpha v))^{-1} \eqa P(x^{-1/2})(e+\alpha v)^{-1} = \textstyle P(x^{-1/2}) \sum_{i=1}^n (1+\alpha \mu_i)^{-1} p_i, 
\end{align*}
where in (a) we use $e+\alpha v\in \inter\calK$, which follows from Lemma~\ref{lem:automorphism}\ref{item:auto_inv}. 
As a result, 
\begin{align}
 &
 \xi_x'(0)= -P(x^{-1/2}) \textstyle \sum_{i=1}^n (1+\alpha \mu_i)^{-2}\mu_i p_i\big\vert_{\alpha=0}
= -P(x^{-1/2}) v = -P(x)^{-1}d, \label{eq:Dx^-1} \\
\textstyle &
\xi_x''(0) = 2P(x^{-1/2}) \textstyle \sum_{i=1}^n \mu_i^2 p_i=2P(x^{-1/2})v^2\nn \\
&\qquad=  2P(x^{-1/2})(P(x^{-1/2}) d)^2 = 2P(x^{-1/2})P(P(x^{-1/2}) d)e = 2P(x)^{-1}P( d) x^{-1},\label{eq:D2x^-1}
\end{align}
where the last equality follows from Lemma~\ref{lem:calculus_P}\ref{item:fundamental}. 
Now, define $\zeta_x(\alpha):= \ln(x+\alpha d)$ with $\dom \zeta_x := \dom \xi_x$. 
From~\eqref{eq:integral_ln(x)}, we have 
\begin{equation}
\zeta_x(\alpha)=\medint\int_{0}^{+\infty} \, (\tau+1)^{-1} e  -(x+\tau e+\alpha d)^{-1}\, \rmd \tau = \medint\int_{0}^{+\infty} \, (\tau+1)^{-1} e  -\xi_{x+\tau e}(\alpha)\, \rmd \tau. 
\end{equation}
By the Leibniz integral rule,~\eqref{eq:Dx^-1} and~\eqref{eq:D2x^-1}, 
we have  
\begin{align*}
\textstyle D\ln(x)[d] = \zeta_x'(0) &= \medint\int_{0}^{+\infty} \,  -\xi_{x+\tau e}'(0)\, \rmd \tau = \medint\int_{0}^{+\infty} \,  P(x+\tau e)^{-1}d\, \rmd \tau\\
D^2 \ln(x)[d,d]  = \zeta_x''(0) &= \medint\int_{0}^{+\infty} \,  -\xi_{x+\tau e}''(0)\, \rmd \tau 
= -2\medint\int_{0}^{+\infty} \,  P(x+\tau e)^{-1}P(d)(x+\tau e)^{-1}\, \rmd \tau. \tag*{\qed}
\end{align*}
\renewcommand{\qedsymbol}{}
\end{proof}

\vspace{-2.5em}



\begin{lemma} \label{lem:inverse_partial_order}
Consider the setting in Lemma~\ref{lem:automorphism}, and let 
$\succeq\,:= \,\succeq_{\calK}$ and $\succ\,:= \,\succ_{\calK}$.  %
Let $a,b\in\bbV$. For all invertible $x\in\bbV$, we have $a\succeq  b$ if and only if $P(x)a\succeq P(x) b$. Also, if $a\succeq b\succ 0$, then 
$b^{-1}\succeq a^{-1}\succ 0$. 
\end{lemma}
\begin{proof}
See Appendix~\ref{app:proof_func}. 
\end{proof}

\noindent 
Equipped with all the lemmas above, we are ready to prove Theorem~\ref{thm:barf}.\\[-1ex]

\noindent
{\bf Proof of Theorem~\ref{thm:barf}.}
Let $\succeq\,:= \,\succeq_{\calK}$ and $\succ\,:= \,\succ_{\calK}$.  
Fix any $y\in\bbR^m_{++}$ and $h\in\bbR^m$, 
define 
\begin{equation}
\pi_y(\alpha):= \rvA^*\nabla \psi(y+\alpha h) = \textstyle \sum_{j=1}^m \nabla_j \psi(y+\alpha h)v_j, \;\; 
\dom \pi_y := \{\alpha\in\bbR:y+\alpha h \in\bbR_{++}^m\}.  \label{eq:pi_y} 
\end{equation}
Since $\psi$ is three-times differentiable on $\bbR_{++}^m$, 
$\pi_y$ is twice differentiable on $\dom \pi_y $, and 
\begin{equation}
\pi_y'(0) = \textstyle \sum_{j=1}^m D^2 \psi(y)[e_j,h]\;v_j\;\; \mbox{and}\;\;\; 
\pi_y''(0) = \textstyle \sum_{j=1}^m D^3 \psi(y)[e_j,h,h]\;v_j.\label{eq:pi_y'}
\end{equation}
Since $\nabla \psi: \bbR_{++}^m\to \bbR_{++}^m$,   
we have $\nabla \psi(y+\alpha h)\in \bbR_{++}^m$ for all $\alpha\in \dom \pi_y $. 
In addition, since 
$\rvA^*(\bbR_{++}^m) \subseteq \inter\calK$ (cf.\ Proposition~\ref{prop:assumption_A}), by~\eqref{eq:pi_y}, we have  
$\pi_y(\alpha) 
\in \inter\calK$ for all $\alpha\in \dom \pi_y $, and in particular, 
$\pi_y(0) \in \inter\calK$. 
Thus,  for all $\tau\ge 0$, we have $\pi_y(0)+\tau e\succeq \pi_y(0)\succ 0$, and by Lemma~\ref{lem:inverse_partial_order}, we have  $(\pi_y(0)+\tau e)^{-1}\preceq \pi_y(0)^{-1}$. Then by Lemma~\ref{lem:P(y)}, we have $P(\pi_y'(0))(\pi_y(0)+\tau e)^{-1} \preceq P(\pi_y'(0))\pi_y(0)^{-1}$, and hence 
\begin{equation}
(1/2)\pi_y''(0)   -   P(\pi_y'(0))(\pi_y(0)+\tau e)^{-1}\succeq (1/2)\pi_y''(0)   -   P(\pi_y'(0))\pi_y(0)^{-1},\quad \forall\, \tau\ge 0.  \label{eq:tau_to_0}
\end{equation}
Note that by the definitions of $\pi_y(0)$, $\pi'_y(0)$ and $\pi''_y(0)$ in~\eqref{eq:pi_y} and~\eqref{eq:pi_y'}, the condition in~\eqref{eq:cond_psi}, and the Cauchy–Schwarz inequality in Corollary~\ref{cor:CS}, we have 
\begin{align}
&(1/2)\pi_y''(0)   -   P(\pi_y'(0))\pi_y(0)^{-1}\nn\\
  \succeq\;  &  \sum_{j=1}^m \frac{(D^2 \psi(y)[e_j,h])^2}{D \psi(y)[e_j]} \;v_j   -   P\big(\textstyle  \sum_{j=1}^m D^2 \psi(y)[e_j,h]\;v_j\big)\big(\sum_{j=1}^m D \psi(y)[e_j] \; v_j\big)^{-1}  \succeq 0.  \label{eq:succeq_0}
\end{align}
Combining~\eqref{eq:tau_to_0} and~\eqref{eq:succeq_0}, we have  
\begin{equation}
(1/2)\pi_y''(0)   -   P(\pi_y'(0))(\pi_y(0)+\tau e)^{-1}\succeq 0,\quad \forall\, \tau\ge 0. \label{eq:tau_succeq_0}
\end{equation}
Now, since $\pi_y(\alpha) \in \inter\calK$ for all $\alpha\in \dom \pi_y $, let us define
\begin{equation}
\omega_y(\alpha):= \ln\pi_y(\alpha) =g(y+\alpha h), \quad {\rm where}\quad \dom \omega_y  := \dom \pi_y. 
\end{equation}
Note that for $\alpha\in \dom \omega_y$, we have $\omega_y'(\alpha)=D \ln(\pi_y(\alpha))[\pi_y'(\alpha)]$ and hence 
\begin{align}
\omega_y''(0) &= D \ln(\pi_y(0))[\pi_y''(0)]+D^2 \ln(\pi_y(0))[\pi_y'(0),\pi_y'(0)]. \label{eq:omega''}
\end{align}
By~\eqref{eq:p'} and~\eqref{eq:p''},  we then have 
\begin{align}
\omega_y''(0) 
&= \medint\int_{0}^{+\infty} \, P(\pi_y(0)+\tau e)^{-1} \pi_y''(0) \,\rmd \tau  -2  \medint\int_{0}^{+\infty} \,  P(\pi_y(0)+\tau e)^{-1}P(\pi_y'(0))(\pi_y(0)+\tau e)^{-1} \,\rmd \tau\nn\\
&= 2\medint\int_{0}^{+\infty} \, P(\pi_y(0)+\tau e)^{-1} \Big((1/2)\pi_y''(0)   -   P(\pi_y'(0))(\pi_y(0)+\tau e)^{-1}\Big) \,\rmd \tau\succeq 0,
\end{align}
where the inequality follows from~\eqref{eq:tau_succeq_0} and that $P(\pi_y(0)+\tau e)\in \Aut(\calK)$ (cf.~Lemma~\ref{lem:automorphism}\ref{item:auto_inv}). 
 Since $D^2 g(y)[h,h] = \omega_y''(0)$, we complete the proof.  \qed

\section{Comparison of Computational Complexity With Related FOMs} \label{sec:comp_complexity}

%

In this section, we compare the computational complexity of the {\tt GMG} method in Algorithm~\ref{algo:GMG}  with three other related FOMs on the four problems instances introduced in Section~\ref{sec:intro}, namely~\eqref{eq:PET}, \eqref{eq:DOPT},~\eqref{eq:QST} and~\eqref{eq:DBQP}.  Those three FOMs are the Barrier Subgradient Method ({\tt BSM})~\cite{Nest_11}, Relatively Smooth Gradient Method ({\tt RSGM})~\cite{Bauschke_17,Lu_18} and the Frank-Wolfe method for minimizing logarithmically homogeneous barriers ({\tt FW-LHB})~\cite{Zhao_23},  all of which were developed to solve certain classes of convex optimization problems without the Lipschitz-gradient condition. 

In the following, we will first briefly introduce {\tt BSM}, {\tt RSGM} and {\tt FW-LHB} based on solving the concave maximization problem $\max\{F(x): x\in\calC\}$, where $\calC$ is the same constraint set as in~\eqref{eq:P}, but the assumptions on $F$ vary from one method to another. Then, for each of the four problems instances in Section~\ref{sec:intro}, we will analyze and compare the computational complexities of all of the four FOMs mentioned above (including {\tt GMG}).   
As a remark, 
 the convergence rate of any method in this section is 
measured in terms of the objective gap. 


\subsection{Overview of Related FOMs}

\noindent 
{\bf BSM.}  For this method, the objective function $F: \inter\calK\to \bbR$ has the form $F := \ln\circ\, \psi$, where $\psi$ is concave and positive on $\inter\calK$. 
In particular, $\psi$ is ``super-differentiable'' on $\inter\calK$, namely, for all $x\in\inter\calK$, $\bar\partial f(x):= \{g\in\bbV: f(y)\le f(x)+\ipt{g}{y-x},\; \forall\, y\in\bbV\}\ne \emptyset$.  The \texttt{BSM} has the same structure as the original dual averaging method (cf.~\cite{Nest_09}), but with an ``atypical''
 prox-function $
 -\ln\det(\cdot): \inter\calK\to \bbR$, namely 
 the ``log-determinant barrier'' for the symmetric cone $\calK$. 
 (In fact, 
 $-\ln\det(\cdot)$ is ``atypical'' because it is not continuous on $\calC$, which is typically assumed for the  prox-function; see e.g.,~\cite{Nest_05}.)  
In the \texttt{BSM}, we choose the step-sizes $\{\beta_k\}_{k\ge 0}$ such that $\beta_0 := \beta_1$ and $\beta_k:= 1+\sqrt{k/n}$ for $k\ge 1$, and  set $s^0:= 0$.  At each iteration $k\ge 0$, the \texttt{BSM} iterates:
\begin{equation}
x^k: = {\argmax}_{x\in \calC}\; \ipt{s^k}{x} + \beta_k \ln\det(x), \qquad s^{k+1}:= s^k +  g_k,\quad  \where \;\; g^k\in \bar\partial f(x^k).
\end{equation}
As shown the proof of~\cite[Theorem~3]{Nest_11}, the \texttt{BSM} has a convergence rate of $O(\sqrt{n}\ln(nk)/\sqrt{k})$. Therefore, to obtain an $\varepsilon$-optimal solution of~\eqref{eq:P}, it requires $O((n/\varepsilon^2)\ln^2(n/\varepsilon))$ iterations. 

As discussed in~\cite[Section~7]{Nest_11}, to solve the projection sub-problem that generates $x^k$, we first perform an eigendecomposition of $s^k$ to obtain its vector of eigenvalues $\lambda^k\in\bbR^n$, and 
then  solve the following  projection problem:
\begin{equation}
{\max}_{\mu \in \Delta_n}\; \ipt{\lambda^k}{\mu} + \textstyle \sum_{i=1}^n \ln(\mu_i).  \label{eq:log_proj}
\end{equation}
The dual of this problem is a one-dimensional 
 self-concordant minimization problem, which can be solved by Newton's method in a standard way. 
 Nesterov showed that it takes $O(\ln n)$ iterations of Newton's method to arrive at the region of quadratic convergence, and 
each iteration of Newton's method takes $O(n)$ arithmetic operations. 
\\[-1ex]

\noindent 
{\bf RSGM.} To introduce this method, we first introduce the notion of {relative smoothness}. Let $h:\bbV\to\bbR\cup\{+\infty\}$ be a Legendre function (cf.~Definition~\ref{def:legendre}) such that $\calC\subseteq \cl\dom h$, $\dom h\subseteq \dom \barF$ (
cf.~\eqref{eq:def_-F})
 and $F$ is differentiable on $\inter\dom h$. We say that $f$ is $L$-relatively-smooth  w.r.t.\ $h$ (on $\inter\dom h$) if {there exists $L>0$ such that $Lh - f$  is convex on $
\inter\dom h$. 
If we can find such a ``reference function'' $h$, then the \texttt{RSGM} starts with $x^0\in \inter\dom h$, and iterates: 
\begin{equation}
x ^ {k+1} =  {\argmax}_{x \in \calC}\;\;  \langle \nabla F (x ^ {k }), x \rangle - L D_{h} (x, x ^ {k }), \quad \forall\,k\ge 0,  \label{eq:rsgm}
\end{equation}
where the Bregman divergence $D_h (y, x ):= h (y) - h (x) - \langle \nabla h (x), y - x \rangle$ for $y\in \dom h$ and $x\in \inter\dom h$. 
As shown in~\cite[Theorem~1]{Bauschke_17}, we have the following convergence rate:   
\begin{equation}
F(x) - F(x^k) 
\le {LD_h(x,x^0)}/{k}, \quad \forall\,k\ge 1, \;\; \forall\, x\in \dom h. \label{eq:rate_rsgm} 
\end{equation}
Let us remark a  subtle but important issue about the convergence rate in~\eqref{eq:rate_rsgm}. 
Since we only require $\calC \subseteq \cl\dom h$, the set of optimal solutions of~\eqref{eq:P}, denoted by $\calX^*\subseteq \calC$, 
may have empty intersection with $\dom h$. In this case, the  convergence rate in~\eqref{eq:rate_rsgm} does not directly translate to the convergence rate of the objective gap. A common approach to resolve this issue is to fix any accuracy $\varepsilon>0$ and $x^*\in\calX^*$, and    construct a surrogate point $\hatx_\varepsilon\in\inter\dom h$ 
as a  strict convex combination of $x^0\in  \inter\dom h$ and $x^*\in \cl\dom h$, such that the objective gap of $\hatx_\varepsilon$ is $\varepsilon/2$. Then by~\eqref{eq:rate_rsgm}, the number of iterations of \texttt{RSGM} to reach an $\varepsilon$-optimal solution can be bounded by $\lceil 2LD_h(\hatx_\varepsilon,x^0)/\varepsilon\rceil$. It then remains to bound the Bregman divergence $D_h(\hatx_\varepsilon,x^0)$. This requires exploiting the specific structures of $h$ and $x^0$, and 
so far has only been done case-by-case. 
Moreover, since $D_h(\hatx_\varepsilon,x^0)$ depends on $1/\varepsilon$ in general, the iteration complexity of \texttt{RSGM} is typically (slightly) worse than $O(1/\varepsilon)$. For examples of such an approach applied to~\eqref{eq:PET} and~\eqref{eq:DOPT}, we refer readers to~\cite[Proposition~2]{Zhao_23pet} and~\cite[Theorem~4.1]{Lu_18}.


In addition, note that  the computational complexity of the projection sub-problem in~\eqref{eq:rsgm} depends on both the constraint set $\calC$ and the reference function $h$. For~\eqref{eq:PET} and~\eqref{eq:DOPT} with rank-one matrices $\{A_i\}_{i=1}^n$, it was shown in~\cite{Bauschke_17} and~\cite{Lu_18} that the objective functions in both problems are relatively smooth to the ``log-barrier'' $h(x):= - \sum_{i=1}^n \ln(x_i)$ for $x\in\bbR_{++}^n$, and hence the projection sub-problem in~\eqref{eq:rsgm} 
becomes~\eqref{eq:log_proj}. As mentioned above, the dual of~\eqref{eq:log_proj} can be solved in $O(n\ln n)$ arithmetic operations via Newton's method.  
\\[-1ex]

\noindent 
{\bf FW-LHB.} 
This method can be regarded as a variant of the classical FW method~\cite{Frank_56,Jaggi_13,Freund_16} for solving convex optimization problems without the Lipschitz-gradient condition. 
In this method, $F$ has the same composite structure as in~\eqref{eq:P}, namely, $F = f\circ \rvA$, but with different assumptions on $f$ and $\rvA$. Specifically, $-f:\bbY\to \barbbR$ is a non-degenerate $\theta$-LHSCB on a regular cone $\Gamma$ for some $\theta\ge 1$ (cf.\ Remark~\ref{rmk:LHSCB} and Proposition~\ref{prop:equiv_barf}), and the linear operator $\rvA:\bbV\to \bbY$ 
is not required to satisfy Assumption~\ref{assump:A}.   The \texttt{FW-LHB} has the same structure as the classical FW method~\cite{Frank_56}, but with a very different choice of step-size. Specifically, starting with $x^0\in\ri\calC$, each iteration $k\ge 0$ reads: 
\begin{equation}
v^k\in{\argmax}_{x\in\calC}\; \lranglet{\nabla F( x^k)}{x}, \quad x^{k+1}:= x^k + \alpha_k (v^k-x^k), \label{eq:FW-LHB}
\end{equation}
where the step-size $\alpha_k:= \min\{{G_k}/({D_k(G_k+D_k)}) , 1\}$,  $D_k:=  \lranglet{\nabla^2 F(x^k)(v^k-x^k) }{v^k-x^k}^{1/2} $  and $G_k:= \lranglet{\nabla F(x^k)}{v^k - x^k}$. In~\cite[Theorem~2.1]{Zhao_23}, it was shown that to obtain an $\varepsilon$-optimal solution to~\eqref{eq:P}, the \texttt{FW-LHB} requires essentially $O(\theta^2/\varepsilon)$ iterations.

\subsection{Comparison of Computational Complexity }

In the following, for each of the four problem instances in Section~\ref{sec:intro}, we analyze the computational complexities of the four FOMs (namely {\tt BSM}, {\tt RSGM}, {\tt FW-LHB} and {\tt GMG}) whenever they are applicable, and summarize the results in Table~\ref{tab:complexity}. For ease of comparison, we let all the four FOMs share the starting same point $x^0 = (1/n) e$ on all the problem instances, 
and choose $\alpha=1$ in  {\tt GMG}. 
Also, we shall make and state clearly  some mild assumptions on the problem data of~\eqref{eq:PET}, \eqref{eq:DOPT} and~\eqref{eq:QST}. 
In addition, in analyzing the computational complexities of the four FOMs on these three problems, 
we will always work under the regime that $n\lesssim e^m$ (i.e., $n\le C e^m$ for some absolute constant $C>0$), which is 
the typical 
 case in various applications. 
\\[-1ex]

\noindent 
\underline{\eqref{eq:PET} with $p_1=\cdots=p_m = 1/m$}: For all of the four methods, the cost-dominating operation in each iteration is computing the gradient of $F$, which costs $O(mn)$ arithmetic operations. In addition, note that since $f(y):= (1/m)\textstyle\sum_{j=1}^m \ln y_j$ is not a LHSCB  on $\bbR_+^m$ (cf.\ Remark~\ref{rmk:LHSCB}), we apply  {\tt FW-LHB} instead to~\eqref{eq:PET} with the scaled objective $F$, 
 namely $\min\{mF(x):x\in\Delta_n\}$. Similarly, for {\tt FW-LHB} to solve the other three problems below, we also scale 
 $F$ if needed (so that $f$ becomes a LHSCB). In addition, the computational complexity of {\tt RSGM} can be obtained from~\cite[Proposition~2]{Zhao_23pet}. Since we focus on the regime $n\lesssim e^m$, it is clear from  Table~\ref{tab:complexity} that {\tt GMG} has the lowest computational complexity  among all the four methods. 
 \\[-1ex]
 
 \noindent 
\underline{\eqref{eq:DOPT} with rank-one matrices $\{A_i\}_{i=1}^n  \subseteq \bbS_{+}^m$}: 
First, note that since $\sum_{i=1}^n A_i \succ 0$, we must have $n\ge m$. 
This implies that for all of the four methods, the gradient computation is the cost-dominating operation in each iteration. 
Note that {\tt FW-LHB} computes the gradient of $F$ more efficiently compared to the other three methods. Specifically, by using the rank-one update, 
each iteration of {\tt FW-LHB} 
 computes the gradient 
 in $O(mn)$ arithmetic operations (cf.~\cite[Theorem~1]{Khachiyan_96}). In contrast, for the other three methods, the  gradient computation takes $O(m^2n)$ arithmetic operations per iteration. In addition, the computational complexity of {\tt RSGM} can be obtained from~\cite[Table~1]{Lu_18}. From  Table~\ref{tab:complexity}, it is clear that {\tt FM-LHB} has the lowest computational complexity  among all the four methods, while {\tt GMG} is slightly worse by a factor of $\ln(n)$.  
  \\[-1ex]
  
 \noindent 
\underline{\eqref{eq:QST} with $p_1=\cdots=p_m = 1/m$ and rank-one matrices $\{A_j\}_{j=1}^m\subseteq \bbH_+^n\setminus\{0\}$}: Unlike the two problems above, it is unclear if {\tt RSGM} can be applied to this problem, and if so, what its computational complexity should be.   Therefore, we only compare the other three methods.
First, note that since $\sum_{j=1}^m A_j \succ 0$, we must have $m\ge n$. Also, note that the gradient computation for all of the three methods costs $O(mn^2)$ arithmetic operations per iteration. Next, note that for {\tt FM-LHB}, the maximization sub-problem in~\eqref{eq:FW-LHB} amounts to finding the maximum eigenvalue of some $S\in \bbH^n$, 
which takes $O(n^2)$ arithmetic operations. In contrast,  both {\tt GMG} and {\tt BSM} involves the eigen-decomposition of some $S\in \bbH^n$ in each iteration, which takes $O(n^3)$ arithmetic operations. In either case, the gradient computation is the dominating operation for all of the three methods. Since we focus on the regime $n \lesssim e^m$, it is clear from  Table~\ref{tab:complexity} that {\tt GMG} has the lowest computational complexity  among all the three methods. 
 \\[-1ex]

 \noindent 
\underline{\eqref{eq:DBQP}}: As mentioned in Section~\ref{sec:another_app}, 
due to the ``non-standard'' behavior of $f$ on $\bdry \dom \barf$, 
this is a genuinely difficult problem to be solved by FOMs. In fact, only {\tt GMG} and {\tt BSM} have provable computational guarantees on this problem. For both methods, the gradient computation costs $O(n^3)$ arithmetic operations. In addition, both methods involve  the eigen-decomposition of some $S\in \bbS^n$ in each iteration, which also costs $O(n^3)$ arithmetic operations. 
 From Table~\ref{tab:complexity}, it is clear that {\tt GMG} has a much better computational complexity compared to {\tt BSM}, in terms of the dependence on both $n$ and $1/\varepsilon$. 

%

%
%





\begin{table}\centering\setlength{\tabcolsep}{8pt}\renewcommand{\arraystretch}{2}
\caption{Comparison of the computational complexities of {\tt BSM}, {\tt RSGM}, {\tt FW-LHB} and {\tt GMG} on the four problem instances in Section~\ref{sec:intro}. The question  mark ``?'' indicates that it is unclear if a FOM can be applied to a particular problem with provable computational guarantees. For each problem, the best computational complexity is highlighted in blue (under the regime that $n\lesssim e^m$). 
\label{tab:complexity}}
\vspace{-2ex}
\begin{tabular}{|c|c|c|c|c|}  
\hline
&\texttt{RSGM} & \texttt{FW-LHB} & \texttt{GMG} & \texttt{BSM} \\\hline
\ref{eq:PET} & $O\big(\frac{mn^2}{\varepsilon}\ln\big(\frac{\ln(n)}{\varepsilon}\big)\big)$ & $O\big(\frac{m^2 n}{\varepsilon}\big)$ & \blue{$O\big(\frac{m n\ln(n)}{\varepsilon}\big)$} & $O\big(\frac{m n^2}{\varepsilon^2}\ln^2\big(\frac{n}{\varepsilon}\big)\big)$ \\\hline
\ref{eq:DOPT} & $O\big(\frac{mn^2}{\varepsilon}\ln\big(\frac{\ln(n)}{\varepsilon}\big)\big)$ & \blue{$O\big(\frac{m^2 n}{\varepsilon}\big)$} & $O\big(\frac{m^2 n\ln(n)}{\varepsilon}\big)$ & $O\big(\frac{m^2 n^2}{\varepsilon^2}\ln^2\big(\frac{n}{\varepsilon}\big)\big)$  \\\hline
\ref{eq:QST} & ? & $O\big(\frac{m^2 n^2}{\varepsilon}\big)$ & \blue{$O\big(\frac{m n^2\ln(n)}{\varepsilon}\big)$} & $O\big(\frac{m n^3}{\varepsilon^2}\ln^2\big(\frac{n}{\varepsilon}\big)\big)$  \\\hline
\ref{eq:DBQP} & ? & ? & \blue{$O\big(\frac{n^3\ln(n)}{\varepsilon}\big)$} &  $O\big(\frac{n^4}{\varepsilon^2}\ln^2\big(\frac{n}{\varepsilon}\big)\big)$   \\\hline
\end{tabular}
\end{table}
%

\vspace{2ex}
\noindent 
{\bf Acknowledgment.} The author sincerely thanks Prof.\ Levent Tun\c{c}el, Prof.\ Bruno Louren\c{c}o,  and Prof.\ Robert Freund for helpful discussions during the preparation of this manuscript. The author's research is partially supported by AFOSR Grant No.\ FA9550-22-1-0356.



\appendix

\section{Proof of Lemma~\ref{lem:simplex} } \label{app:proof_simplex}

First, note that since $\Omega$ is a closed and convex cone, it is pointed if and only if $\Omega^*$ is solid. 
The following proof leverages results in~\cite[Chapters 6 \& 8]{Rock_70}.
Define $\calH_s:= \{x\in\bbV:\ipt{s}{x}=1\}$, so that $\calQ_s = \calA \cap \calH_s$ 
is a convex set.  Since $\ri\calA\ne\emptyset$, let $z\in \ri\calA$, and note that  $\ri\calA =\ri(\cl\calA)= \ri \Omega$. Since $\{0\}\subsetneq \Omega$ and $\Omega$ is pointed, $\Omega$ is not a linear subspace, and hence $0\not\in\ri\Omega$. 
Since $z\in \ri \Omega$,  we know that $z\ne 0$, and hence $\ipt{s}{z}>0$. As a result, $z':= z/\ipt{s}{z}\in(\ri \Omega)\cap \calH_s = (\ri \calA)\cap \calH_s$. Consequently, $\ri \calQ_s = \ri(\calA\cap \calH_s) = (\ri\calA)\cap \calH_s=(\ri\Omega)\cap \calH_s\ne \emptyset$ and $\calC_s:=\cl \calQ_s =  (\cl\calA)\cap \calH_s =\Omega\cap \calH_s. $ 
Since $\ri \calQ_s\ne \emptyset$, both $\calQ_s$ and $\cl \calQ_s$ are nonempty. In addition, since $\calQ_s$ is convex, so are $\ri\calQ_s$ and $\cl \calQ_s$. 
Now, it remains to show that $\calC_s$ is bounded, which implies that both $\ri \calQ_s$ and $\calQ_s$ are bounded.  
Let $\R_{\calC_s}$ denote the recession cone of $\calC_s$, and 
we have $\R_{\calC_s} = \R_{\Omega}\cap\R_{\calH_s} = \{x\in \Omega:\ipt{s}{x}=0\}$. Since $s\in \inter\Omega^*$, we have $\R_{\calC_s} = \{0\}$. Since $\calC_s$ is closed and convex, this implies that $\calC_s$ is bounded. 
 \qed
 
\section{Proofs in Section~\ref{sec:algo} } \label{app:proof_analysis}

{\bf Proof of Lemma~\ref{lem:lambda_max}.}
Write the spectral decomposition $x = \sum_{i=1}^n \lambda_i(x) q_i$.  
Since $z\in\calC$ and 
$\{q_i\}_{i=1}^n\subseteq \calC$, we have $\ipt{z}{q_i}\ge 0$ for all $i\in[n]$ and $\sum_{i=1}^n \ipt{z}{q_i}=\ipt{e}{z}= 1$. Therefore,  
\begin{equation}
{\max}_{z\in\calC}\; \ipt{z}{x} = {\max}_{z\in\calC}\; \textstyle\sum_{i=1}^n \lambda_i(x)\, \ipt{z}{q_i} \le  {\max}_{\beta\in\Delta_n}\; \textstyle\sum_{i=1}^n \lambda_i(x)\, \beta_i = \lambda_{\max}(x).
\end{equation}
In addition,  since $q_1\in \calC$, 
we have ${\max}_{z\in\calC}\; \ipt{z}{x}\ge \ipt{q_1}{x} = \lambda_1(x)\normt{q_1}^2 = \lambda_{\max}(x)$.
For the second claim, note that since $q_1\in\calC$, if $x\preceq y$, then 
 $\lambda_{\max}(x) = \ipt{q_1}{x} \le \ipt{q_1}{y}\le \lambda_{\max}(y)$.
 \qed\\[-1ex]

\noindent
{\bf Proof of Lemma~\ref{lem:alpha}.} Write the spectral decomposition $y = \sum_{i=1}^n \lambda_i q_i$. Similar to the proof of Lemma~\ref{lem:lambda_max}, since $x\in\calC$, we have $\ipt{x}{q_i}\ge 0$ for all $i\in[n]$ and $\sum_{i=1}^n \ipt{x}{q_i}= 1$. Since $\alpha\in(0,1]$, $t\mapsto t^\alpha$ is concave on $\bbR_{+}$, and hence 
\begin{align*}
\textstyle
\ipt{x}{y^\alpha} = \sum_{i=1}^n \ipt{x}{q_i} \lambda_i^\alpha  \le \big(\sum_{i=1}^n \ipt{x}{q_i} \lambda_i\big)^\alpha= \ipt{x}{y}^\alpha. \tag*{$\qed$}
\end{align*}

\noindent
{\bf Proof of Lemma~\ref{lem:error_bd}. }
Define 
$\calY:= \rvA(\calC)$. From Proposition~\ref{prop:well_posed}, we know that $\dom \barf \cap \calY\ne \emptyset$, and $\calX^*\ne\emptyset$. Then by Corollary~\ref{cor:error_bd}, for any $x^*\in \calX^*\subseteq\calC$ and $x\in\inter\calK$, 
\begin{align*}
F^* - F(x) &= \barf(\rvA x) - \barf(\rvA x^*) \le \theta\ln\big({-\ipt{\nabla \barf(\rvA x)}{\rvA x^*}}/{\theta}\big)\\
\qquad &= \theta\ln\big({\ipt{\nabla F(x)}{x^*}}/{\theta}\big)
 \le \theta\ln\big({{\max}_{z\in\calC}\;\ipt{\nabla F(x)}{z}}/{\theta}\big) = \theta\ln\big({\lambda_{\max}(\nabla F(x))}/{\theta}\big). 
\end{align*}
To complete the proof, it remains to note that for any $z\in\inter\calK$, 
\begin{align*}
\theta\ln\big({\lambda_{\max}(z)}/{\theta}\big) = \theta \, {\max}_{i\in[n]}\, \ln\big(\lambda_{i}(z)/{\theta}\big) = \theta \, {\max}_{i\in[n]}\, \lambda_{i}\big(\ln(z/{\theta})\big)=\theta\lambda_{\max}\big(\ln({z}/{\theta})\big). \tag*{$\qed$}
\end{align*}

\section{Proofs in Section~\ref{sec:function} } \label{app:proof_func}

{\bf Proof of Proposition~\ref{prop:LGrad_convex}.} 
It suffices to prove the following lemma.
\begin{lemma}
 $x\in \calK$ if and only if $\ipt{x}{u}\ge 0$ for all $u\in\PI(\bbV)$. 
\end{lemma}

\begin{proof}
By the self-duality of $\calK$, 
$x\in\calK$ if and only if $\ipt{x}{z}\ge 0$ for all $z\in\calK$. Since $\PI(\bbV)\subseteq \calK$, 
we have $\ipt{x}{u}\ge 0$ for all $u\in\PI(\bbV)$. Conversely, write the spectral decomposition $x=\sum_{i=1}^n \mu_i q_i$. 
Since  $q_i\in \PI(\bbV)$ for $i\in[n]$,   we have 
$\mu_i = \mu_i \normt{q_i}^2 = \ipt{x}{q_i}   \ge 0$ for $i\in[n]$, and thus $x\in\calK$. 
\end{proof}

\noindent
{\bf Proof of Proposition~\ref{prop:equiv_barf}.}
First,  since $\barf$ is essentially smooth, (c) and (d) are clearly equivalent. Next, we show that (a) $\Rightarrow$ (b) $\Rightarrow$ (c) $\Rightarrow$ (a). If $p$ is complete, then from~\eqref{eq:Z_p}, we know that  $\dom \barf = \calC(d)$ does not contain a line, and by~\cite[Theorem~4.1.3]{Nest_04}, we know that $\barf$ is non-degenerate, which implies that $\barf$ is strictly convex on $\calC(d)$, i.e., $\barf$ is essentially strictly convex. Now, if $p$ is not complete, then from~\eqref{eq:Z_p}, there exists $h\ne 0$ such that $h\in \calL(\calC(d))$, and hence for some $y\in \calC(d)$, the line $\calL_{y,h}:=\{y+th:t\in\bbR\}\subseteq \calC(d) $. 
By~\cite[Corollary~2.3.1]{Nest_94}, we know that 
$\barf$ is constant along this line, 
and therefore $\barf$ is not strictly convex on $\calC(d)$. \qed 

\vspace{1ex}

\noindent
{\bf Proof of Lemma~\ref{lem:sum_v_i}.}
It suffices to 
consider $\Omega\ne \bbV$, which implies that $\{0\}\subsetneq\Omega^*$. 
Since $\Omega$ is nonempty, closed and convex, we have $\Omega^{**} = \Omega$ and 
and hence 
\begin{equation}
\inter\Omega= \inter \Omega^{**} = \{x\in\bbV: \ipt{x}{z}> 0,\;\; \forall\,z\in\Omega^*\setminus\{0\} \}. \label{eq:inter_K}
\end{equation}
Fix any $z\in\Omega^*\setminus\{0\}$.  Since  $\sum_{i=1}^n v_i \in\inter\Omega$, 
by~\eqref{eq:inter_K}, we have 
$\sum_{i=1}^n \ipt{v_i}{z} >0$.  
Since $\{v_i\}_{i=1}^n\subseteq \Omega$,  this implies that there exists $j\in[n]$ such that $\ipt{v_j}{z}>0$. Therefore, for any $\alpha_i>0$, $i\in[n]$, we have $\ipt{\sum_{i=1}^n \alpha_iv_i}{z} =\sum_{i=1}^n \alpha_i\ipt{v_i}{z} >0$. Since this holds for any $z\in\Omega^*\setminus\{0\}$, by~\eqref{eq:inter_K}, 
we have $\sum_{i=1}^n \alpha_i v_i\in\inter\Omega$. \qed


\vspace{1ex}

\noindent
{\bf Proof of Lemma~\ref{lem:inverse_partial_order}.}
To prove the first part, it suffices to show that for all invertible $x\in\bbV$,  $a\in\calK$ if and only if $P(x)a\in\calK$, 
which 
directly follows from Lemma~\ref{lem:automorphism}\ref{item:auto_inv}. 
Next, note that if $a\succeq e$, then $e\succeq a^{-1}$. To see this, 
note that if $a\succeq e$, then $\lambda_i(a)\ge 1$ for $i\in[n]$. 
As a result, $1/\lambda_i(a)\le 1$ for $i\in[n]$, which leads to $e\succeq a^{-1}$. 
Now, if $a\succeq b \succ 0$, then $P(b^{-1/2})a \succeq P(b^{-1/2})b = e$, and hence $P(b^{1/2})a^{-1} 
\preceq e$. As a result, we have  $0\prec a^{-1}
\preceq P(b^{-1/2})e = b^{-1}$. \qed 



\scriptsize
\bibliographystyle{IEEEtr}
\bibliography{math_opt,mach_learn,GF-papers-nips-better,stat_ref}

\end{document}